%

\documentclass[12pt]{amsart}

\textwidth=16.00cm
\textheight=24.00cm
\topmargin=-1.00cm
\oddsidemargin=0.25cm
\evensidemargin=0.25cm
\headheight=0.3cm
\headsep=0.5cm 

\usepackage{amssymb, amsmath, setspace}
\usepackage[all,cmtip]{xy}
\usepackage{amscd}
\usepackage[bookmarks, bookmarksopen=true, bookmarksnumbered=true]{hyperref}
\usepackage[active]{srcltx}

\theoremstyle{plain}
\newtheorem{theorem}{Theorem}[section]
\newtheorem{prop}[theorem]{Proposition}

\newtheorem{lm}[theorem]{Lemma}

\newtheorem{cor}[theorem]{Corollary}
\newtheorem{conj}[theorem]{Conjecture}
\theoremstyle{definition}

\newtheorem{defn}[theorem]{Definition}
\newtheorem{rmk}[theorem]{Remark}

\newcommand{\ul} \underline

\newcommand{\mtx}{\left[ \begin{matrix}}
\newcommand{\mtxend}{\end{matrix}\right]}

\newcommand{\Lift}{\mbox{Lift}}
\newcommand{\Quad}{\mbox{Quad}}

\DeclareMathOperator{\reg}{{reg}}

\newcommand{\ffi}{\varphi}

\newcommand{\wb}{\widetilde{\beta}}
\def\ZZ{\mathbb Z}
\def\PP{\mathbb P}
\newcommand{\mif}{\mbox{if} ~}

\newcommand{\cP}{\mathcal P}

\begin{document}

\title[Properties of cut ideals associated to ring graphs]{Properties
of cut ideals associated to ring graphs}

\author[Uwe Nagel and Sonja Petrovi\'c]{Uwe Nagel${}^*$ and Sonja Petrovi\'c}
\address{Department of Mathematics, University of Kentucky, Lexington,
KY 40506, USA \\ Department of Mathematics, Statistics and Computer
Science, University of Illinois, Chicago, IL, 60607 USA} \email{{\tt
uwenagel@ms.uky.edu, petrovic@math.uic.edu}}

\thanks{${}^*$  The work for this paper was done while the first
author was sponsored by the National Security Agency under Grant
Number H98230-07-1-0065.
}


\begin{abstract}
A cut ideal of a graph records the relations among the cuts of the graph.  These toric ideals have been introduced by Sturmfels and
Sullivant who also posed the problem of relating their properties to
the combinatorial structure of the graph.\\
We study the cut ideals of the family of ring graphs, which includes
trees and cycles. We show that they have quadratic Gr\"obner
bases and that  their coordinate rings are Koszul,  Hilbertian, and
Cohen-Macaulay, but not  Gorenstein in general.\\
\end{abstract}

\maketitle

\vskip0.2in
\section{Introduction}

Let $G$ be any finite (simple) graph with vertex set $V(G)$ and edge
set $E(G)$. In \cite{StSu} Sturmfels and Sullivant  associate a
projective variety $X_G$ to $G$ as follows.  Let $A|B$ be an
unordered partition of the vertex set of $G$.  Each such partition
defines a cut of the graph, denoted by $Cut(A|B)$, which is the set
of edges $\{i,j\}$ such that $i\in A$, $j\in B$ or $j\in A$, $i\in
B$.  For each $A|B$, we can then assign variables to the edges
according to whether they are in $Cut(A|B)$ or not. The coordinates
$q_{A|B}$ are indexed by the unordered partitions $A|B$, and the
variables encoding whether the edge $\{i, j\}$ is in the cut are
$s_{ij}$ and $t_{ij}$ (for "separated" and "together").  The variety
$X_G$, which we call \emph{the cut variety of $G$}, is specified by
the following homomorphism between polynomial rings:
\begin{align*}
\phi_G : K[q_{A|B} : A|B \mbox{ partition}] &\to K[s_{ij},t_{ij}: \{i,j\}\mbox{ edge of } G], \\
q_{A|B} &\mapsto \prod_{\{i,j\}\in Cut(A|B)}{s_{ij}} \prod_{\{i,j\}\in E(G)\backslash Cut(A|B)}{t_{ij}}
\end{align*}
The cut ideal $I_G$ is the kernel of the map $\phi_G$. It is a homogeneous toric ideal (note that $\deg\phi_G(q_{A|B})=|E(G)|$). The variety $X_G$ is defined
by the cut ideal $I_G$.

Cut ideals generalize toric ideals arising in phylogenetics and the  study of contingency tables. However, the algebraic properties of cut ideals are largely unknown. It is clear that the properties of the cut ideal
depend on the combinatorics of the graph.  Sturmfels and Sullivant pose the following conjecture.

\begin{conj}[\cite{StSu}, Conjecture 3.7.] \label{conj-CM}
The semigroup algebra $K[q]/I_G$ is normal if and only if $K[q]/I_G$ is Cohen-Macaulay if and only if $G$ is free of $K_5$
minors.
\end{conj}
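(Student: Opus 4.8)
The plan is to close the double equivalence as a cycle of three implications: normal $\Rightarrow$ Cohen--Macaulay $\Rightarrow$ ($K_5$-minor free) $\Rightarrow$ normal. The first arrow is immediate: since $K[q]/I_G$ is an affine semigroup ring, Hochster's theorem guarantees that normality implies Cohen--Macaulayness. It therefore suffices to establish the two combinatorial implications, and for both I would pass to the \emph{cut polytope} $CUT(G) \subseteq \mathbb{R}^{E(G)}$, the convex hull of the $0/1$ incidence vectors of the cuts of $G$. Because every generator $\phi_G(q_{A|B})$ has the same degree $|E(G)|$, normality of $K[q]/I_G$ is equivalent to the integer decomposition property of $CUT(G)$.

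For the implication ($K_5$-minor free) $\Rightarrow$ normal, the key external input is the theorem of Barahona and Mahjoub: $G$ is free of $K_5$ minors if and only if $CUT(G)$ is cut out by the trivial inequalities together with the cycle (in particular triangle) inequalities. I would then observe that such a facet description forces $CUT(G)$ to be \emph{$2$-level}: for a triangle with edges $e_1,e_2,e_3$, the cut condition makes $x_{e_1}+x_{e_2}+x_{e_3}$ even, so each cycle-facet functional takes only two values on the vertices. A $2$-level lattice polytope is compressed, and compressed polytopes enjoy the integer decomposition property; hence $K[q]/I_G$ is normal. This step should go through cleanly once the $2$-level property is checked against the Barahona--Mahjoub facets.

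For the implication Cohen--Macaulay $\Rightarrow$ ($K_5$-minor free), I would argue the contrapositive by persistence of non-Cohen--Macaulayness under minors. Concretely: (i) verify by direct computation that $K[q]/I_{K_5}$ fails to be Cohen--Macaulay (consistent with its known non-normality), by comparing depth with Krull dimension of its semigroup ring; and (ii) show that Cohen--Macaulayness of $K[q]/I_G$ is inherited by every minor, so that the presence of a $K_5$ minor propagates non-Cohen--Macaulayness up to $G$. For (ii) I would analyze the two elementary minor operations: edge contraction $G/e$ corresponds to restricting to the subsemigroup of cuts that do not separate the endpoints of $e$, realizing $K[q]/I_{G/e}$ as a combinatorial pure subring of $K[q]/I_G$, while edge deletion $G\setminus e$ corresponds to forgetting the $e$-coordinate.

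The main obstacle will be step (ii). Normality is well behaved here---it descends to combinatorial pure subrings---but Cohen--Macaulayness is far more delicate: a pure (direct-summand) subring of a Cohen--Macaulay ring need not be Cohen--Macaulay, so the retraction realizing $K[q]/I_{G/e}$ inside $K[q]/I_G$ does not by itself transport depth in the required direction. One must instead control the local cohomology of $K[q]/I_G$ under contraction and deletion directly---for instance by exhibiting, whenever $G$ contains a $K_5$ minor, an explicit nonvanishing class in $H^i_{\mathfrak m}(K[q]/I_G)$ below the Krull dimension, built from the non-Cohen--Macaulay witness for $K_5$. This is precisely the point at which the ring-graph techniques of the present paper do not extend, and where the general conjecture resists a uniform argument; making (ii) unconditional---or replacing it by a direct homological obstruction attached to a $K_5$ minor---is the heart of the difficulty.
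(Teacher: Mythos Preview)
The statement you are attempting to prove is \emph{Conjecture}~\ref{conj-CM}: in the paper it is an open problem, not a theorem, and the paper contains no proof of it. The authors merely quote it from \cite{StSu} and then establish it for the subclass of ring graphs (Theorem~\ref{thm-intro} and Theorem~\ref{thm:main-tree+cycle}), so there is no ``paper's own proof'' to compare your proposal against.

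Regarding the content of your outline: the implication normal $\Rightarrow$ Cohen--Macaulay is indeed immediate from Hochster's theorem, and your route for ($K_5$-minor free) $\Rightarrow$ normal via the Barahona--Mahjoub description of $CUT(G)$ and the compressed/2-level machinery is essentially the standard one and is known to go through. The genuine gap is exactly where you locate it: the contrapositive step (ii), showing that Cohen--Macaulayness of $K[q]/I_G$ descends to minors, is not justified. Combinatorial pure subrings and coordinate projections do not in general preserve Cohen--Macaulayness, and you have not supplied any mechanism---an explicit local cohomology class, a depth calculation, or otherwise---that would propagate the failure for $K_5$ up to an arbitrary $G$ containing it as a minor. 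Since you yourself flag this as ``the heart of the difficulty,'' what you have written is a proof sketch that correctly isolates the open part of the conjecture, not a proof.
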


We provide evidence for this conjecture by establishing it for a large class of such graphs. We refer to Section \ref{sec:sp-graphs} for the definition of ring graphs.

\begin{theorem}
  \label{thm-intro}
The cut ideal of a ring graph admits a squarefree quadratic Gr\"obner basis. Hence, its coordinate ring is Cohen-Macaulay.
\end{theorem}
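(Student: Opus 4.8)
The plan is to induct on the recursive construction of ring graphs recalled in Section~\ref{sec:sp-graphs}: every ring graph is built from single edges and cycles by repeatedly gluing two pieces along a shared vertex or a shared edge (clique sums of order at most two). It therefore suffices to (i) settle the base cases and (ii) prove that each such gluing preserves the existence of a squarefree quadratic Gr\"obner basis. For a single edge $\phi_G$ is a monomial isomorphism, so $I_G=0$; for a cycle $C_n$ one writes down an explicit squarefree quadratic Gr\"obner basis. Since the blocks of a tree are edges, this also covers trees, and the inductive step then reduces everything to the cycle computation.

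The engine for the inductive step is the multiplicativity of the cut construction under these gluings. If $G$ is obtained by identifying a vertex $v$ of $G_1$ with a vertex of $G_2$, then any partition $A|B$ of $V(G)$ restricts to partitions of $V(G_1)$ and $V(G_2)$ that place $v$ on the same side, and because $E(G)=E(G_1)\sqcup E(G_2)$ we get $\phi_G(q_{A|B})=\phi_{G_1}(q_{A_1|B_1})\,\phi_{G_2}(q_{A_2|B_2})$. Grading the generators of $I_{G_1}$ and $I_{G_2}$ by the side of the shared vertex (a two-element base, up to the global symmetry $A|B=B|A$) exhibits $K[q]/I_G$ as a toric fiber product of $K[q]/I_{G_1}$ and $K[q]/I_{G_2}$; edge gluing is the analogous statement over the base recording the sides of the two endpoints of the shared edge. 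I would then invoke Sullivant's theorem that a toric fiber product of ideals admitting squarefree quadratic Gr\"obner bases, taken with respect to term orders compatible with the grading, again admits such a basis, whose elements are lifts of the Gr\"obner bases of $I_{G_1}$ and $I_{G_2}$ together with the quadratic relations coming from the base.

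The main obstacle is to make this step genuinely uniform. One must equip every cycle and every edge with term orders on its cut ideal that are simultaneously compatible with the gradings by all the shared vertices and edges used along an arbitrary sequence of gluings, and then check at each stage that the hypotheses of the toric fiber product theorem hold---in particular that the base contributes only squarefree quadrics and that the relevant initial ideals are generated in the required degree. Propagating a compatible family of term orders through the whole construction while keeping the lifted basis squarefree and quadratic is the delicate part; the unordered nature of the partitions and the bookkeeping for the shared edge in the two-sum (where the variables $s_{ij},t_{ij}$ of the glued edge are common to both pieces) are where the most care is needed. The cycle base case, which feeds the entire induction, likewise requires a genuine explicit computation.

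Finally, the Cohen-Macaulay conclusion is formal. A squarefree quadratic Gr\"obner basis yields a squarefree initial ideal of $I_G$; by Sturmfels' criterion a toric ideal with a squarefree initial ideal defines a normal semigroup ring, since the squarefree initial ideal corresponds to a unimodular, hence normal, regular triangulation of the associated polytope. By Hochster's theorem normal affine semigroup rings are Cohen-Macaulay, so $K[q]/I_G$ is Cohen-Macaulay, as claimed.
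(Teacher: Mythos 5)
Your proposal follows essentially the same route as the paper: induct on the clique-sum (toric fiber product) decomposition of a ring graph with edges and cycles as base cases, lift squarefree quadratic Gr\"obner bases through the $0$- and $1$-sums via the Sturmfels--Sullivant/Sullivant lifting theorem, and deduce Cohen--Macaulayness from the squarefree initial ideal via normality of the semigroup and Hochster's theorem. The one substantive ingredient you leave open --- the explicit squarefree quadratic Gr\"obner basis for the cut ideal of a cycle --- is exactly what the paper supplies by identifying $I_{C_{n+1}}$ with the phylogenetic ideal of the claw tree $K_{1,n}$ and invoking the Chifman--Petrovi\'c result; aside from that (and the case of disconnected ring graphs, which the paper handles separately through its analysis of disjoint unions), your outline matches the paper's proof.
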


This and further results on cut ideals of ring graphs are
established in Section \ref{sec:sp-graphs}.

The next section is a
collection of necessary definitions and results that we  use
repeatedly. In particular, we recall the theorem of Sturmfels and
Sullivant \cite{StSu} about clique sums. Section \ref{sec-cycles} is
devoted to cycles. Using the correspondence between cut ideals
of cycles and certain phylogenetic ideals on claw trees, we
provide a squarefree quadratic Gr\"obner bases for these cut ideals.
Moreover, we find a  formula for their number of minimal
generators. In Section \ref{sec-trees} we turn to cut
ideals of trees. By \cite{StSu},  the algebraic properties of the cut
ideal of a tree depend only on the number of edges. We determine its $h$-vector.
Section \ref{sec-unions} complements the study of clique sums by
providing a generating set and a Gr\"obner basis for the cut ideal
of a disjoint union of two graphs. The results about cut ideals of
ring graphs are established in Section \ref{sec:sp-graphs}. In
particular, we provide an estimate of their Castelnuovo-Mumford
regularity.

\section{Clique sums, Segre products, and Gr\"obner bases}
\label{sec-background}

We recall some concepts and results that we use later on.
Throughout this paper all graphs are assumed to be finite and simple. The vertex and the edge set of such a graph $G$ are denoted by $V(G)$ and $E(G)$, respectively. A clique of $G$ is a subset of $V(G)$ such that the vertex-induced subgraph of $G$ is complete, that is, there is an edge between any two vertices.

Let now $G_1 = (V_1, E_1)$ and $G_2 = (V_2, E_2)$ be two graphs such that $V_1 \cap V_2$ is a clique of both graphs. Then the {\em clique sum} of $G_1$ and $G_2$ is the graph $G = G_1 \# G_2$ with vertex set $V_1 \cup V_2$ and edge set $E_1 \cup E_2$. If the clique $V_1 \cap V_2$ consists of $k+1$ vertices, then $G$ is also called the {\em $k$-sum} of $G_1$ and $G_2$.
If $0 \leq k \leq 2$, then Sturmfels and Sullivant \cite{StSu} relate the graph-theoretic operation of forming clique  sums to the algebraic operation of taking toric fiber products as defined in \cite{Su}. Defining two operations, Lift and Quad, they show that the generators of the cut ideal $I_G$ can be obtained from the generators of the cut ideals $I_{G_1}$ and $I_{G_2}$.  More precisely, their result is:

\begin{theorem}[\cite{StSu}, Theorem 2.1]
\label{thm-lift-quad}
Let $G$ be a $k$-sum of $G_1$ and $G_2$ with  $0 \leq k \leq 2$. Denote by $\bf{F_1}$ and $\bf{F_2}$  binomial generating sets for the smaller cut ideals
$I_{G_1}$ and $I_{G_2}$. Then
\begin{align*}
\bf{M} = \Lift(\bf{F_1}) \cup \Lift(\bf{F_2}) \cup \Quad(G_1,G_2)
\end{align*}
is a generating set for the cut ideal $I_G$.   Furthermore, if $\bf{F_1}$ and $\bf{F_2 }$ are Gr\"obner bases, then there exists a term order such
that $\bf{M}$ is a Gr\"obner basis of $I_G$.
\end{theorem}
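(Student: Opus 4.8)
The plan is to recognize the cut ideal $I_G$ as a toric fiber product of $I_{G_1}$ and $I_{G_2}$ in the sense of \cite{Su}, and then to invoke the structural results of that paper on generators and Gr\"obner bases of toric fiber products. The starting point is the combinatorial decomposition of cuts under a clique sum. Write $K = V_1 \cap V_2$ for the shared clique, which has $k+1 \le 3$ vertices and hence $\binom{k+1}{2}$ edges, all lying in $E_1 \cap E_2$. Any unordered partition $A|B$ of $V(G)$ restricts to partitions $A_1|B_1$ of $V_1$ and $A_2|B_2$ of $V_2$ that induce one and the same split of $K$; conversely, a partition of $V_1$ and a partition of $V_2$ inducing the same split of $K$ glue to a partition of $V(G)$. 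On the level of cuts, $Cut(A|B)$ is the union of $Cut(A_1|B_1) \subseteq E_1$ and $Cut(A_2|B_2) \subseteq E_2$, and the two agree on the shared edges $E(K)$. This is exactly the combinatorial shadow of a fiber product.

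I would next install the multigrading that exhibits $\phi_G$ as a toric fiber product map. Grade the shared-edge variables $\{s_e, t_e : e \in E(K)\}$ so that the monomial $\prod_{e \in E(K)}(\,\cdot\,)$ arising from a split $\tau$ of $K$ records the \emph{type} of that split; there are $1$, $2$, and $4$ such types for $k = 0, 1, 2$ respectively. This assigns to every variable $q_{A|B}$ of $G$ the type of the split it induces on $K$, and likewise multigrades the source rings for $G_1$ and $G_2$. Under this grading, the decomposition of cuts says precisely that $q_{A|B}$ corresponds to the product of a degree-$\tau$ generator for $\phi_{G_1}$ and a degree-$\tau$ generator for $\phi_{G_2}$ of matching type $\tau$, so that $\phi_G$ is the toric fiber product of $\phi_{G_1}$ and $\phi_{G_2}$ over the semigroup generated by the type vectors.

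The decisive hypothesis in Sullivant's theorem guaranteeing that the fiber product ideal is generated by lifts of the generators of the factors together with quadratic binomials, and that Gr\"obner bases of the factors assemble into a Gr\"obner basis for a suitable induced term order, is that the degree vectors indexing the types are linearly independent. This is where the restriction $k \le 2$ is essential and must be verified by hand: for $k = 0$ and $k = 1$ there are at most two type vectors and independence is immediate, while for $k = 2$ the four monomials $t_{12}t_{13}t_{23}$, $s_{12}s_{13}t_{23}$, $s_{12}t_{13}s_{23}$, and $t_{12}s_{13}s_{23}$ attached to the four splits of the triangle $K$ have exponent vectors in $\ZZ^6$ that are readily checked to be linearly independent. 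Granting this, Sullivant's theorems apply, and the remaining task is to match his abstract generators with the combinatorial operations: the lifts of the factor generators are exactly $\Lift(\mathbf{F_1})$ and $\Lift(\mathbf{F_2})$, namely each binomial of $I_{G_i}$ completed in all ways compatible with a fixed type, while the quadratic fiber-product binomials, which interchange the $G_1$- and $G_2$-parts of two cuts sharing a type, are exactly $\Quad(G_1,G_2)$.

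I expect the main obstacle to be the bookkeeping in the fiber-product identification caused by the fact that cuts are indexed by \emph{unordered} partitions: the global symmetry $A|B \leftrightarrow B|A$ means that matching a cut of $G_1$ with a cut of $G_2$ along $K$ requires a consistent choice of orientation, and this is what produces the correct multiplicities among the lifts and the precise shape of the quadratic relations. Verifying that these combinatorial completions coincide with Sullivant's lifts, and that no generators beyond the quadrics are needed, equivalently, confirming the linear-independence condition and tracing its consequences for the degree-by-degree structure of the fiber product, is the technical heart of the argument.
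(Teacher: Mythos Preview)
The paper does not prove this theorem at all: it is stated as a quotation of Theorem~2.1 in \cite{StSu} and used as a black box. So there is no ``paper's own proof'' to compare against; what you have written is a sketch of the argument in \cite{StSu} itself, and your reconstruction is accurate in outline. In that paper the cut ideal of a $k$-sum with $k\le 2$ is indeed identified with the toric fiber product of $I_{G_1}$ and $I_{G_2}$ over the multigrading by splits of the shared clique, after which Theorems~12 and~13 of \cite{Su} supply both the generating set $\Lift(\mathbf{F_1})\cup\Lift(\mathbf{F_2})\cup\Quad(G_1,G_2)$ and the Gr\"obner basis statement; the linear-independence check you describe for $k=2$ is exactly the hypothesis needed there. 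Your remark about unordered partitions and the resulting orientation bookkeeping is a genuine but minor technical point that \cite{StSu} handles by a suitable indexing convention.
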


Let $X \subset \PP^n$ be a projective subvariety over a field $K$. We denote its homogeneous coordinate ring by $A_X$. It is a standard
graded $K$-algebra. The variety $X$ is said to be arithmetically Cohen-Macaulay if $A_X$ is a Cohen-Macaulay ring. The Hilbert
function of $X$ or $A_X$ is defined by $h_X (j) = h_{A_X} (j) = \dim_K [A_X]_j$. If $j \gg 0$, then it becomes polynomial in $j$.
This polynomial is the Hilbert polynomial $p_X = p_{A_X}$ of $X$ or $A_X$. Following \cite{Abh}, the variety $X$ (or its coordinate ring
$A_X$) is said to be {\em Hilbertian} if its Hilbert function is polynomial in every non-negative degree, i.e., for every integer $j
\geq 0$, $h_X (j) = p_X (j)$.

The cut ideal of a graph $G$ on $n$ vertices defines a projective variety $X_G \subset \PP^{2^{n-1}-1}$. If $G$ is the $0$-sum of
$G_1$ and $G_2$, then its cut variety $X_G$ is isomorphic to the Segre product of $X_{G_1} \times X_{G_{2}}$. Algebraically, this
means that the coordinate ring $A_{X_G}$ is the Segre product $A_{X_{G_1}} \boxtimes  A_{X_{G_2}}$ of $A_{X_{G_1}}$ and
$A_{X_{G_2}}$. The Segre product of Cohen-Macaulay rings is often not Cohen-Macaulay. The precise result is (see, e.g., \cite{St-V},
Theorem I.4.6):

\begin{lm}
  \label{lem-Segre-CM}
Let $A, B$ be two graded Cohen-Macaulay $K$-algebras that both have dimension at least two. Then their Segre product $A\boxtimes B$ is
Cohen-Macaulay if and only if $A$ and $B$ are Hilbertian.
\end{lm}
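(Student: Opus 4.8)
\emph{Setup and strategy.} The plan is to compute the graded local cohomology of $R := A \boxtimes B$ and apply the Cohen--Macaulay criterion $\operatorname{depth} R = \dim R$, i.e. $H^k_{\mathfrak{m}_R}(R) = 0$ for all $k < \dim R$, where $\mathfrak{m}_R$ is the irrelevant maximal ideal. Write $a = \dim A$ and $b = \dim B$, so that $\dim R = a+b-1$, and set $X = \operatorname{Proj} A$, $Y = \operatorname{Proj} B$, so that $\operatorname{Proj} R = X \times_K Y$ and the twisting sheaf $\mathcal{O}(n)$ corresponds to the external product $\mathcal{O}_X(n) \boxtimes \mathcal{O}_Y(n)$. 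The Serre--Grothendieck correspondence gives, for $i \geq 1$ and every $n$, isomorphisms $H^{i+1}_{\mathfrak{m}_R}(R)_n \cong H^i(X \times Y, \mathcal{O}(n))$, and likewise for $A$ and $B$; the remaining pieces $H^0_{\mathfrak{m}_R}(R)$ and $H^1_{\mathfrak{m}_R}(R)$ are governed by the saturation sequence.

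\emph{Künneth and the surviving modules.} Since $A$ and $B$ are Cohen--Macaulay of dimension at least two, they have depth at least two, whence $H^0_{\mathfrak{m}_A}(A) = H^1_{\mathfrak{m}_A}(A) = 0$ and therefore $H^0(X, \mathcal{O}_X(n)) = A_n$ for all $n$ (and similarly for $Y$); moreover the Cohen--Macaulay property forces $H^i(X, \mathcal{O}_X(n)) = 0$ except for $i = 0$ and $i = a-1$, with $H^{a-1}(X, \mathcal{O}_X(n)) = H^a_{\mathfrak{m}_A}(A)_n$. Feeding this into the Künneth formula over the field $K$,
\begin{equation*}
H^i(X \times Y, \mathcal{O}(n)) \cong \bigoplus_{p+q=i} H^p(X, \mathcal{O}_X(n)) \otimes_K H^q(Y, \mathcal{O}_Y(n)),
\end{equation*}
only the index pairs $(p,q) \in \{0, a-1\} \times \{0, b-1\}$ contribute. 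Translating back through the correspondence, the $(0,0)$ term yields $H^0(X\times Y,\mathcal{O}(n)) = R_n$ and hence $\operatorname{depth} R \geq 2$, the $(a-1,b-1)$ term yields the top (dualizing) module $H^{a+b-1}_{\mathfrak{m}_R}(R)$, and the only local cohomology modules below the top degree that can be nonzero are
\begin{equation*}
H^a_{\mathfrak{m}_R}(R) \cong H^a_{\mathfrak{m}_A}(A) \boxtimes B \qquad\text{and}\qquad H^b_{\mathfrak{m}_R}(R) \cong A \boxtimes H^b_{\mathfrak{m}_B}(B).
\end{equation*}
Thus $R$ is Cohen--Macaulay if and only if both of these vanish (when $a=b$ they merge into a single module $H^a_{\mathfrak{m}_R}(R)$ and the argument is unchanged).

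\emph{Identifying vanishing with Hilbertianity.} To finish I would read off the vanishing degree by degree. Because $\dim B \geq 1$ and $B$ is standard graded, $B_n \neq 0$ for every $n \geq 0$ while $B_n = 0$ for $n < 0$; hence $(H^a_{\mathfrak{m}_A}(A) \boxtimes B)_n = H^a_{\mathfrak{m}_A}(A)_n \otimes_K B_n$ vanishes for all $n$ exactly when $H^a_{\mathfrak{m}_A}(A)_n = 0$ for all $n \geq 0$. Finally, the Grothendieck--Serre formula $h_A(n) - p_A(n) = \sum_{j}(-1)^j \dim_K H^j_{\mathfrak{m}_A}(A)_n$, which for a Cohen--Macaulay $A$ collapses to $h_A(n) - p_A(n) = (-1)^a \dim_K H^a_{\mathfrak{m}_A}(A)_n$, shows that $H^a_{\mathfrak{m}_A}(A)_n = 0$ precisely when $h_A(n) = p_A(n)$. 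Therefore $H^a_{\mathfrak{m}_R}(R) = 0$ if and only if $A$ is Hilbertian, and symmetrically $H^b_{\mathfrak{m}_R}(R) = 0$ if and only if $B$ is Hilbertian, which is the assertion.

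\emph{Main obstacle.} The principal difficulty is bookkeeping the passage between graded local cohomology and sheaf cohomology cleanly, and in particular using $\dim \geq 2$ to force depth $\geq 2$ so that $H^0(X,\mathcal{O}_X(n)) = A_n$ holds in \emph{every} degree: without this the $H^0$ and $H^1$ contributions would pollute the Künneth computation and destroy the clean equivalence with the Hilbertian condition. The only other delicate points are the identification of the surviving Künneth summand with the Segre product $H^a_{\mathfrak{m}_A}(A)\boxtimes B$ as a graded module, and the sign collapse in the Grothendieck--Serre formula; note also that $H^a_{\mathfrak{m}_A}(A)_n$ is typically nonzero in negative degrees, which is harmless here precisely because $B_n = 0$ for $n<0$, explaining why a condition only on degrees $n \geq 0$ is the right one.
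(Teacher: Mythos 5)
Your argument is correct. The paper does not actually prove this lemma itself --- it only cites St\"uckrad--Vogel, Theorem I.4.6 --- and your K\"unneth/local-cohomology computation is essentially the standard proof behind that citation (it is the Goto--Watanabe description of $H^i_{\mathfrak m}(A\boxtimes B)$, with the hypothesis $\dim\ge 2$ used exactly where you use it, to kill $H^0$ and $H^1$ so that only the modules $H^a_{\mathfrak m_A}(A)\boxtimes B$ and $A\boxtimes H^b_{\mathfrak m_B}(B)$ survive below the top degree, and with the Grothendieck--Serre formula converting their vanishing into the Hilbertian condition).
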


Cut ideals are examples of toric ideals. In this note a toric  ideal $I$ is any ideal that  defines an affine or projective variety $X$
that is parametrized by a set of monomials. Thus, $I$ can be generated by a set of binomials. The following result is well-known to  specialists. We provide a proof for the convenience of the reader.

\begin{lm}
  \label{lem-G-and-CM}
Let $I$ be a homogeneous toric ideal of a polynomial ring $S$. If, for  some monomial order on $S$, the initial ideal of $I$ is
squarefree, then $S/I$ is Cohen-Macaulay and Hilbertian.
\end{lm}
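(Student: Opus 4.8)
The plan is to transport both assertions from the monomial quotient $S/\operatorname{in}(I)$ to $S/I$ by a flat degeneration, and then to read off the required properties of $S/\operatorname{in}(I)$ from the geometry that squarefreeness imposes. Two standard facts underlie the transport step: passing to an initial ideal preserves the Hilbert function, so $h_{S/I} = h_{S/\operatorname{in}(I)}$ in every degree, and it can only enlarge the resolution, so $\operatorname{pd}(S/I) \le \operatorname{pd}(S/\operatorname{in}(I))$ and hence $\operatorname{depth}(S/I) \ge \operatorname{depth}(S/\operatorname{in}(I))$ by Auslander-Buchsbaum. Since $\dim(S/I) = \dim(S/\operatorname{in}(I))$, it suffices to prove that $S/\operatorname{in}(I)$ is Cohen-Macaulay and that its Hilbert function is polynomial for all $j \ge 0$; both statements then descend to $S/I$.

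Because $\operatorname{in}(I)$ is squarefree, it is the Stanley-Reisner ideal of a simplicial complex $\Delta$ on the variable set, so $S/\operatorname{in}(I) = K[\Delta]$. The key geometric input, due to Sturmfels, is that for a homogeneous toric ideal $I = I_A$ a squarefree initial ideal forces $\Delta$ to be a regular (unimodular) triangulation of the lattice polytope $P = \operatorname{conv}(A)$, where $A$ is the exponent set of the parametrizing monomials; as these all have the same degree, $P$ lies in an affine hyperplane and $\dim P = \dim(S/I) - 1$. Thus $\Delta$ triangulates the ball $P$. Triangulations of balls are Cohen-Macaulay --- for instance because regular triangulations are shellable, or directly by Reisner's criterion, since all vertex links are balls or spheres --- so $K[\Delta]$ is Cohen-Macaulay, and by the transport step so is $S/I$.

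For the Hilbertian property I would compute the Hilbert function of $K[\Delta]$ from its $f$-vector: writing $f_{i-1}$ for the number of $i$-element faces of $\Delta$, one has $h_{K[\Delta]}(j) = \sum_{i \ge 1} f_{i-1}\binom{j-1}{i-1}$ for every $j \ge 1$. Each summand, read as a polynomial in $j$, agrees with the combinatorial binomial for all $j \ge 1$ (both vanish when $j < i$), so $h_{K[\Delta]}$ coincides with its Hilbert polynomial $p$ in all positive degrees. The only possible discrepancy is at $j = 0$, where $h_{K[\Delta]}(0) = 1$ while $p(0) = \sum_{i \ge 1} (-1)^{i-1} f_{i-1} = \chi(\Delta)$, the (unreduced) Euler characteristic, using $\binom{-1}{i-1} = (-1)^{i-1}$. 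Since $\Delta$ triangulates a ball it is contractible, so $\widetilde{\chi}(\Delta) = 0$ and $\chi(\Delta) = 1 = h_{K[\Delta]}(0)$. Hence $h_{S/I} = h_{K[\Delta]} = p$ in every degree $j \ge 0$, which is precisely the Hilbertian condition.

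The main obstacle is the second paragraph: everything hinges on identifying the squarefree initial complex with a triangulation of the polytope $P$ and on knowing that such triangulations are Cohen-Macaulay. Once that structural fact is in hand, the descent to $S/I$ via the depth and Hilbert-function comparisons, and the degree-zero Euler-characteristic check for Hilbertianity, are routine.
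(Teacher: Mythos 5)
Your argument is correct, but it follows a genuinely different route from the paper. The paper stays entirely on the toric side: it invokes Sturmfels' Theorem 13.15 (a squarefree initial ideal forces the projective toric variety to be projectively normal), deduces normality of the underlying semigroup, applies Hochster's theorem to get Cohen--Macaulayness, and gets the Hilbertian property by identifying the Hilbert function with the Ehrhart polynomial of the associated polytope. You instead degenerate to the initial ideal and work with the Stanley--Reisner complex: the squarefree initial complex is a unimodular regular triangulation $\Delta$ of the polytope, regular triangulations of balls are shellable (hence Cohen--Macaulay), and semicontinuity of depth under Gr\"obner degeneration transports this back to $S/I$; the Hilbertian claim then falls out of the $f$-vector formula for $h_{K[\Delta]}$ together with $\chi(\Delta)=1$ for a ball, and the fact that initial ideals preserve Hilbert functions. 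Both routes lean on the same book of Sturmfels (Chapter 8 versus Chapter 13), but yours avoids Hochster's theorem and Ehrhart theory and makes the degree-zero check completely elementary, at the cost of not recording normality of the semigroup ring --- which is the property the paper actually cares about in connection with Conjecture 1.1. One small caveat: your parenthetical justification via Reisner's criterion, ``all vertex links are balls or spheres,'' is not literally true for arbitrary triangulations of balls (only the vanishing of reduced homology of links below top dimension is needed, which follows from Munkres' topological invariance); but your primary justification via shellability of regular triangulations is sound, so this does not affect the proof.
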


\begin{proof}
The ideal  $I$ defines an affine toric variety $Y$ and a projective toric variety $X$. $X$ is projectively normal if and only if $Y$ is
normal. Theorem 13.15 in  \cite{St} says that, if for some term order $\prec$ the initial ideal $in_\prec(I)$ is squarefree, then
$X$ is projectively normal.

By assumption, $S/I$ is isomorphic to a semigroup ring $K[B]$ for  a semigroup $B \subset \mathbb N_0^d$. Thus, using Proposition 13.5 in
\cite{St}, we conclude that the semigroup $B$ is normal. Hence, by a theorem of Hochster \cite{Ho}, the semigroup ring $K[B]$ is
Cohen-Macaulay. Furthermore, in non-negative degrees its Hilbert function is equal to the Ehrhart polynomial of the corresponding polytope (see, e.g., \cite{MS}, Lemma 12.4), thus $S/I$ is Hilbertian.
\end{proof}

\section{Cut ideals of cycles}
\label{sec-cycles}

The starting point of this section is the realization of the cut ideals associated to cycles as certain phylogenetic ideals. This
will enable us to use the main result from \cite{ChPe}.

Recall that the claw
tree $K_{1,n}$ is the complete bipartite graph with $n$ edges from one vertex (the root) to the other $n$ vertices (the leaves). We
denote by $I_n$  the ideal of phylogenetic invariants for the general group-based model for the group $\mathbb Z_2$ on the claw
tree $K_{1,n}$, as in \cite{ChPe}. This ideal  is the kernel of the following homomorphism between polynomial  rings (see \cite{StEtAl}):
\begin{align*}
\varphi_n: R := K[q_{g_1,\dots,g_n}:g_1,\dots,g_n\in \mathbb Z_2]
& \to K[a_g^{(i)}: g\in \mathbb Z_2,i=1,\dots,n+1] =: R'  \\
q_{g_1,\dots,g_n}  & \mapsto
a_{g_1}^{(1)}a_{g_2}^{(2)}\dots a_{g_n}^{(n)}a_{g_1+g_2+\dots +g_n}^{(n+1)}.
\end{align*}
The coordinate $q_{g_1,\dots,g_n}$ corresponds to observing  the element $g_1$ at the first leaf of the tree, $g_2$ at the second,
and so on, though here we are considering the phylogenetic ideals in Fourier coordinates instead of probability coordinates (cf.\
\cite{StSu-05}).

Denote by $C_{n+1}$ the $(n+1)$-cycle on the vertex set $[n+1] := \{1,\ldots,n+1\}$.
We are ready to state the announced comparison result.

\begin{lm}\label{Cycle-Tree-Corr-Lm} The phylogenetic ideal $I_n$ on the claw tree with $n$ leaves and the cut ideal $I_{C_n}$ of an $(n+1)$-cycle agree up to renaming the
variables.
\end{lm}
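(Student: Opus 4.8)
The plan is to exhibit explicit bijections between the source variables and between the target variables of the two defining maps $\phi_{C_{n+1}}$ and $\varphi_n$, and then to check that these bijections turn one monomial parametrization into the other. Since each ideal is by definition the kernel of its map, two maps that agree after renaming have identical kernels, which is exactly the asserted agreement of $I_n$ and $I_{C_{n+1}}$.

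First I would set up the dictionary on the source side. Encode an unordered partition $A|B$ of the vertex set $[n+1]$ by a labeling $x\colon [n+1] \to \mathbb{Z}_2$ with $x_i = 0$ for $i \in A$ and $x_i = 1$ for $i \in B$; because the partition is unordered, $x$ is well defined only up to the global flip $x \mapsto x + (1,\dots,1)$. Labeling the edges of the cycle as $e_i = \{i,i+1\}$ for $1 \le i \le n$ and $e_{n+1} = \{n+1,1\}$, the edge $e_i$ lies in $Cut(A|B)$ exactly when its endpoints receive different labels, i.e.\ when $h_i := x_i + x_{i+1} = 1$ (indices read cyclically). The key observation is the telescoping identity $\sum_{i=1}^{n+1} h_i = 0$ in $\mathbb{Z}_2$, since each $x_i$ occurs in exactly two summands; note also that the global flip leaves every $h_i$ unchanged. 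Hence the cut is faithfully recorded by the indicator vector $(h_1,\dots,h_{n+1})$, which is constrained to the hyperplane $\sum h_i = 0$, and conversely every vector in that hyperplane arises from a unique cut (reconstruct $x$ by setting $x_1 = 0$). Setting $g_i := h_i$ for $1 \le i \le n$ therefore gives a bijection between the cuts of $C_{n+1}$ and the tuples $(g_1,\dots,g_n) \in \mathbb{Z}_2^n$, with the dependent coordinate determined by $h_{n+1} = g_1 + \dots + g_n$. This matches the $2^n$ variables $q_{A|B}$ with the $2^n$ variables $q_{g_1,\dots,g_n}$.

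Next I would match the target rings by identifying, for each edge $e_i$, the separation variable $s_{e_i}$ with $a_1^{(i)}$ and the togetherness variable $t_{e_i}$ with $a_0^{(i)}$, a bijection between the $2(n+1)$ variables $\{s_{ij},t_{ij}\}$ and the $2(n+1)$ variables $\{a_g^{(i)}\}$. Under the two dictionaries the monomial attached to a cut becomes $\prod_{i=1}^{n+1} a_{h_i}^{(i)} = a_{g_1}^{(1)} \cdots a_{g_n}^{(n)} a_{g_1+\dots+g_n}^{(n+1)}$, which is precisely $\varphi_n(q_{g_1,\dots,g_n})$. Thus $\phi_{C_{n+1}}$ and $\varphi_n$ coincide after renaming variables, and therefore so do their kernels $I_{C_{n+1}}$ and $I_n$.

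I expect the only real subtlety to be the bookkeeping on the source side: verifying that passing to the edge-indicator vector is both well defined and bijective, which requires simultaneously accounting for the global-flip ambiguity of an unordered partition and for the hidden parity constraint $\sum h_i = 0$. The conceptual heart of the matching is that the wrap-around edge $e_{n+1}$ of the cycle produces exactly the sum coordinate $g_1 + \dots + g_n$ appearing in the $(n+1)$-st factor of the phylogenetic map; recognizing this correspondence is what makes the two parametrizations literally equal rather than merely isomorphic.
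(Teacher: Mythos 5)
Your argument is correct and complete. The paper itself does not give an argument at all: its proof of this lemma is a one-line citation of Theorem 5.5 in Sturmfels--Sullivant \cite{StSu}, which records exactly this identification of cut ideals of cycles with the $\mathbb{Z}_2$ group-based phylogenetic ideals on claw trees. What you have done is reprove that cited result from scratch, and your verification is sound: the encoding of an unordered partition by a labeling $x\colon[n+1]\to\mathbb{Z}_2$ modulo the global flip, the passage to the edge-indicator vector $h_i = x_i + x_{i+1}$, the telescoping parity constraint $\sum_{i=1}^{n+1} h_i = 0$ that both makes the correspondence bijective and produces the dependent coordinate $h_{n+1} = g_1+\cdots+g_n$, and the matching $s_{e_i}\leftrightarrow a_1^{(i)}$, $t_{e_i}\leftrightarrow a_0^{(i)}$ on the target side together show that the two monomial parametrizations literally coincide after renaming, hence so do their kernels. (You correctly read the lemma's $I_{C_n}$ as the cut ideal of the $(n+1)$-cycle $C_{n+1}$ defined just before the statement.) The trade-off is the usual one: the citation keeps the paper short and defers to the source where the correspondence is established in greater generality, while your self-contained computation makes the mechanism transparent --- in particular it isolates the fact that the wrap-around edge of the cycle is precisely what produces the $(n+1)$-st ``sum'' factor of the phylogenetic map, which is the conceptual content of the equivalence and is also the structure exploited later in the paper's counting argument in Lemma \ref{lem-recursion}.
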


\begin{proof}
This follows from Theorem 5.5 in \cite{StSu}.
\end{proof}

Recall that each toric ideal has a Gr\"obner basis consisting of binomials. We say that an ideal has a {\em squarefree Gr\"obner basis} if it has a
 Gr\"obner basis consisting of binomials, where each binomial is a difference of squarefree monomials.
 
Combining Lemma \ref{Cycle-Tree-Corr-Lm} and  \cite{ChPe},
Proposition $3$, we obtain the following consequence.

\begin{prop}
\label{prop-cycle}
For each integer $n \geq 4$,  there is an order on the variables such that
the cut ideal of the $n$-cycle has a quadratic  squarefree
Gr\"obner basis with respect to the resulting lexicographic order. In particular, the initial ideal of the cut ideal with respect to this order is squarefree.
\end{prop}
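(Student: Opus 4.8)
The plan is to transport the conclusion from the phylogenetic side, where it is already available, via the dictionary supplied by Lemma~\ref{Cycle-Tree-Corr-Lm}. Taking the cycle in that lemma to be the $n$-cycle, the cut ideal of the $n$-cycle coincides, up to a renaming of the $q$-variables, with the phylogenetic ideal $I_{n-1}$ of the $\mathbb Z_2$ group-based model on the claw tree $K_{1,n-1}$. Since $n \geq 4$, this claw tree has $n-1 \geq 3$ leaves, which is exactly the range in which I intend to invoke \cite{ChPe}, Proposition~$3$.

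First I would record that \cite{ChPe}, Proposition~$3$, supplies a lexicographic term order on $R$ for which $I_{n-1}$ has a Gr\"obner basis consisting of quadratic binomials, each a difference of two squarefree monomials. Next I would make the renaming from Lemma~\ref{Cycle-Tree-Corr-Lm} explicit as a bijection between the two sets of $q$-variables; this bijection induces an isomorphism of polynomial rings carrying $I_{n-1}$ onto the cut ideal of the $n$-cycle. Such a relabeling isomorphism transports the given lexicographic order to a lexicographic order on the variables $q_{A|B}$ (induced by the correspondingly reordered list), and it sends each Gr\"obner basis element to a quadratic binomial that is again a difference of squarefree monomials. Because being a Gr\"obner basis, together with the leading-term, quadratic, and squarefree properties, is preserved under a relabeling of variables, the image is a quadratic squarefree Gr\"obner basis of the cut ideal of the $n$-cycle with respect to the resulting lex order.

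The ``in particular'' clause then follows at once: for every binomial $m_1 - m_2$ in this Gr\"obner basis both $m_1$ and $m_2$ are squarefree, so its initial term is a squarefree monomial; hence the initial ideal is generated by squarefree monomials and is therefore squarefree.

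The substantive mathematics is imported wholesale from \cite{ChPe}, so I expect no analytic difficulty. The only steps needing care are bookkeeping: confirming the index shift (the $n$-cycle corresponds to $n-1$ leaves, so the bound $n \geq 4$ matches the hypothesis of at least three leaves) and checking that the variable relabeling genuinely carries lexicographic orders to lexicographic orders rather than to some more general monomial order. Both are routine once the correspondence in Lemma~\ref{Cycle-Tree-Corr-Lm} is spelled out.
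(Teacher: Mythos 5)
Your proposal is correct and follows essentially the same route as the paper: both deduce the statement from \cite{ChPe}, Proposition~3, transported through the variable renaming of Lemma~\ref{Cycle-Tree-Corr-Lm}, with the same index shift (the $n$-cycle corresponding to the claw tree with $n-1$ leaves). The only detail the paper adds that you omit is the observation that \cite{ChPe} states its result for $K=\mathbb{C}$ while its proof is valid over an arbitrary field, which is needed since the present paper works over a general field $K$.
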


\begin{proof}
  In case $K  = {\mathbb C}$, Proposition 3 in \cite{ChPe} gives the   analogous results for the phylogenetic ideal $I_{k-1}$ on the claw
  tree with $k-1$ edges. The arguments of the proof are valid over an   arbitrary field. Hence, the claim follows by Lemma
  \ref{Cycle-Tree-Corr-Lm}.
\end{proof}

\begin{rmk}
  By \cite{StSu}, Corollary 2.4, the cut varieties defined by $n$-cycles are not smooth if $n \geq 4$.
\end{rmk}

Invoking Lemma \ref{lem-G-and-CM}, we obtain our first contribution to Conjecture
\ref{conj-CM}.

\begin{cor}
The cut variety defined by  any cycle is arithmetically Cohen-Macaulay.
\end{cor}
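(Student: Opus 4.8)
The plan is to combine the squarefree-initial-ideal statement of Proposition \ref{prop-cycle} with the Cohen-Macaulayness criterion of Lemma \ref{lem-G-and-CM}, the two results doing essentially all of the work. Recall that, by the definitions recalled in Section \ref{sec-background}, saying that the cut variety $X_{C_n} \subset \PP^{2^{n-1}-1}$ is arithmetically Cohen-Macaulay means exactly that its homogeneous coordinate ring $A_{X_{C_n}} = K[q]/I_{C_n}$ is a Cohen-Macaulay ring, and that $I_{C_n}$ is a homogeneous toric ideal (as noted in the introduction).

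First I would dispose of all cycles of length at least four, which are the ones covered by Proposition \ref{prop-cycle}. For $n \geq 4$ that proposition supplies a lexicographic order for which the initial ideal $\mathrm{in}_{\prec}(I_{C_n})$ is squarefree. Since $I_{C_n}$ is a homogeneous toric ideal, Lemma \ref{lem-G-and-CM} applies verbatim and yields that $K[q]/I_{C_n}$ is Cohen-Macaulay (and, as a bonus, Hilbertian). This settles the corollary for every cycle with at least four vertices.

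The only point demanding separate attention is the smallest cycle, the triangle $C_3 = K_3$, which lies outside the range $n \geq 4$ of Proposition \ref{prop-cycle}. Here I would argue directly: the triangle has $2^{3-1} = 4$ unordered vertex partitions, hence four coordinates $q_{A|B}$, and I would check that the four associated degree-three monomials in the $s_{ij}, t_{ij}$ have trivial lattice of relations among their exponent vectors, i.e. no nonzero integer combination of the exponent vectors vanishes. It follows that $\phi_{C_3}$ is injective, so $I_{C_3} = (0)$ and $K[q]/I_{C_3}$ is a polynomial ring in four variables, which is trivially Cohen-Macaulay. This elementary computation is the only piece not delivered automatically by the cited results; there is no genuine obstacle, the substance of the statement being carried entirely by Proposition \ref{prop-cycle} and Lemma \ref{lem-G-and-CM}.
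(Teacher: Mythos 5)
Your proof is correct and follows exactly the paper's route: the paper's entire argument is to invoke Lemma \ref{lem-G-and-CM} on the squarefree initial ideal furnished by Proposition \ref{prop-cycle}. Your explicit verification that $I_{C_3}=(0)$ (which is correct, the four exponent vectors being linearly independent) is a welcome extra bit of care for the one case that falls outside the range $n\geq 4$, which the paper leaves implicit.
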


\begin{rmk}
The ideal $I_{C_4}$ is Gorenstein because it is a complete intersection cut out by three quadrics. However, the cut ideals $I_{C_5}$ and $I_{C_6}$ are not
Gorenstein. We suspect that $I_{C_n}$ is not Gorenstein whenever $n \geq 5$.
\end{rmk}

We conclude this section by determining the number of minimal generators of the cut ideals of cycles. As preparation, we establish a recursion.
Denote by $A_n$ the homogeneous coordinate ring of the variety defined by the phylogenetic ideal $I_n$, that is,
\[
A_n := K[a_{g_1}^{(1)}a_{g_2}^{(2)}\dots a_{g_n}^{(n)}a_{g_1+g_2+\dots +g_n}^{(n+1)} : g_i \in \mathbb Z_2,i=1,\dots,n].
\]
Assigning each variable degree one, as a $K$-algebra $A_n$ is generated  by $2^n$ monomials of degree $n+1$. We claim:

\begin{lm}
\label{lem-recursion} If $n\geq 2$, then $ \dim_K [A_n]_{2(n+1)} = \dim_K [A_{n-1}]_{2n} + 3^n - 2^{n-1}.$
\end{lm}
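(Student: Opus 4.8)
The plan is to compute $\dim_K [A_n]_{2(n+1)}$ directly as the number of distinct monomials occurring in degree $2(n+1)$, and then to read off the recursion by subtracting the analogous count in degree $2n$ for $A_{n-1}$. First I would record that, since $A_n$ is generated by the $2^n$ monomials $\varphi_n(q_g)$ of degree $n+1$, the graded piece $[A_n]_{2(n+1)}$ is spanned by the products $\varphi_n(q_g)\varphi_n(q_h)$ of two generators, and a $K$-basis is given by the \emph{distinct} such monomials. Writing $g = (g_1,\dots,g_n)$ and $g_{n+1} := g_1 + \cdots + g_n$, each generator equals $\prod_{i=1}^{n+1} a_{g_i}^{(i)}$ with $\sum_{i=1}^{n+1} g_i \equiv 0 \pmod 2$, so the generators are indexed by the even-weight vectors in $\{0,1\}^{n+1}$. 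A product of two of them is the monomial $\prod_{i=1}^{n+1}(a_0^{(i)})^{2-c_i}(a_1^{(i)})^{c_i}$, where $c_i \in \{0,1,2\}$ records the exponent of $a_1^{(i)}$. Distinct vectors $c = (c_1,\dots,c_{n+1})$ give distinct monomials, so $\dim_K[A_n]_{2(n+1)}$ equals the number of $c \in \{0,1,2\}^{n+1}$ that are \emph{achievable}, i.e.\ of the form $c = u + v$ for two even-weight vectors $u,v \in \{0,1\}^{n+1}$.

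The key step is to characterize the achievable $c$. At a coordinate with $c_i \in \{0,2\}$ the decomposition forces $u_i = v_i = c_i/2$, while at a coordinate with $c_i = 1$ one is free to put the $1$ into $u$ or into $v$; this freedom lets one adjust the parities of $\mathrm{wt}(u)$ and $\mathrm{wt}(v)$. Carrying out this bookkeeping, both weights can be made even exactly when the number of coordinates equal to $1$ is even, with the single exception that, when there is no coordinate equal to $1$, one also needs the number of coordinates equal to $2$ to be even (as then $u=v$ is forced). I would phrase the outcome as: $c$ is achievable if and only if $\sum_i c_i$ is even and $c$ is not twice an odd-weight $0/1$ vector.

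Counting then splits into two elementary tallies. The number of $c \in \{0,1,2\}^{n+1}$ with $\sum_i c_i$ even is $\tfrac{1}{2}(3^{n+1}+1)$, by the sign trick $\sum_c (-1)^{\sum_i c_i} = \prod_{i=1}^{n+1}(1-1+1) = 1$; and the number of excluded vectors $2w$ with $w \in \{0,1\}^{n+1}$ of odd weight is $2^n$. This yields the closed form $\dim_K[A_n]_{2(n+1)} = \tfrac{1}{2}(3^{n+1}+1) - 2^n$. Substituting $n-1$ for $n$ gives $\dim_K[A_{n-1}]_{2n} = \tfrac{1}{2}(3^{n}+1) - 2^{n-1}$, and subtracting the two expressions produces $\tfrac{1}{2}(3^{n+1}-3^{n}) - (2^n - 2^{n-1}) = 3^n - 2^{n-1}$, which is exactly the claimed recursion.

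The main obstacle I anticipate is getting the achievability characterization precisely right — in particular isolating the degenerate case $c = 2w$ with $w$ of odd weight, which is the only place where a naive parity count overshoots. Once this exceptional case is pinned down, the two counting steps are routine and the recursion is immediate arithmetic. A purely bijective proof of the increment $3^n - 2^{n-1}$ is also conceivable, but the direct count above seems more transparent and self-contained.
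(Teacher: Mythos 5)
Your proof is correct, but it takes a genuinely different route from the paper's. The paper works recursively and bijectively: it writes down the monomial basis $\mathcal M$ of $[A_n]_{2(n+1)}$, projects onto the monomials $\mathcal N$ in the first $n$ indices (of which there are $3^n$), observes that a monomial of $\mathcal N$ lifts to two elements of $\mathcal M$ precisely when it is a non-square with both component sums $g_1+\cdots+g_n$ and $h_1+\cdots+h_n$ equal to $0$, and then identifies the count of these exceptional monomials with $\dim_K[A_{n-1}]_{2n}$ minus the number $2^{n-1}$ of squares. You instead compute the closed form $\dim_K[A_n]_{2(n+1)}=\tfrac12(3^{n+1}+1)-2^n$ outright, by characterizing the achievable exponent vectors $c\in\{0,1,2\}^{n+1}$ (sum even, and $c$ not twice an odd-weight $0/1$ vector — your parity bookkeeping here is right, including the degenerate case $k=0$ forcing $u=v$), and then obtain the recursion by subtraction. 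Your version proves more: it delivers the explicit value of $h_n(2)$ that the paper only extracts later, in the proof of Proposition \ref{prop-gen-cut}, by unwinding the recursion from the initial value $h_1(2)=3$; and indeed your closed form agrees with the one derived there. What the paper's argument buys in exchange is a direct structural comparison between $[A_n]_{2(n+1)}$ and $[A_{n-1}]_{2n}$ that explains \emph{why} a recursion of this shape holds, at the cost of a slightly delicate case analysis of when a monomial ``doubles.'' Either argument suffices for the lemma as stated.
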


\begin{proof}
The following set of monomials is a $K$-basis of $[A_n]_{2(n+1)}$:
\begin{align*}
\mathcal M := \{ a_{g_1}^{(1)}a_{h_1}^{(1)}\dots a_{g_n}^{(n)}a_{h_n}^{(n)} a_{g_1+\dots+g_n}^{(n+1)}a_{h_1+\dots+h_n}^{(n+1)} : g_j,h_j\in\mathbb Z_2\}.
\end{align*}
We are going to compare this basis with the set of monomials
\[
{\mathcal N} := \{a_{g_1}^{(1)}a_{h_1}^{(1)}\dots a_{g_n}^{(n)}a_{h_n}^{(n)}  : g_j,h_j\in\mathbb Z_2\}.
\]
Observing that, for the monomials in ${\mathcal M}$, the variables
$a_{g_1+\dots+g_n}^{(n+1)}a_{h_1+\dots+h_n}^{(n+1)}$ are determined by the remaining variables,
one it tempted to guess that there is a bijection between ${\mathcal M}$ and ${\mathcal N}$. However, this is not quite true. To illustrate this,
consider the following example, where $n=3$. Then, by interchanging the factors $a_{0}^{(1)}$ and $a_{1}^{(1)}$ in
\begin{align*}
m_1:=a_{0}^{(1)}a_{0}^{(2)}a_{0}^{(3)}a_{1}^{(1)}a_{1}^{(2)}a_{0}^{(3)} = a_{1}^{(1)}a_{0}^{(2)}a_{0}^{(3)}a_{0}^{(1)}a_{1}^{(2)}a_{0}^{(3)}=:m_2
\end{align*}
this monomial in ${\mathcal N}$ produces two different monomials in $\mathcal M$, namely
\begin{align}
\label{even}\tag{E} m_1 a_{0+0+0}^{(n+1)}a_{1+1+0}^{(n+1)}&= m_1 a_{0}^{(n+1)}a_{0}^{(n+1)} \in [A_3]_8, \\ \label{odd}\tag{O} m_2
a_{1+0+0}^{(n+1)}a_{0+1+0}^{(n+1)}&= m_2 a_{1}^{(n+1)}a_{1}^{(n+1)}
\in
 [A_3]_8.
\end{align}
To keep track if a monomial in ${\mathcal N}$ gives rise to one or two monomials in ${\mathcal M}$ we use the following decomposition
\[
{\mathcal N} = {\mathcal N}_1 \sqcup {\mathcal N}_2 \sqcup {\mathcal N}_3,
\]
where
\begin{eqnarray*}
  {\mathcal N}_1 & := & \{n \in {\mathcal N} : g_1 + \cdots g_n + h_1 + \cdots h_n = 1\}   \\
{\mathcal N}_2 & := &    \{n \in {\mathcal N} : n\ \text{is a square}\} \\
{\mathcal N}_3 & := & \{n \in {\mathcal N} : g_1 + \cdots g_n + h_1 + \cdots h_n = 0, n\ \text{is not a square}\}
\end{eqnarray*}
The monomials in ${\mathcal N}_1$ and ${\mathcal N}_2$ give rise to just one monomial in ${\mathcal M}$, whereas the monomials in ${\mathcal N}_3$
produce precisely two monomials in ${\mathcal M}$ by interchanging the factors $a_{g_i}^{(i)}$ and $a_{h_i}^{(i)}$, where $g_i \neq h_i$.
This interchange  alters the parity of $g_1 + \cdots + g_n = h_1 + \cdots + h_n$. It follows that
\begin{eqnarray*}
  |{\mathcal M}| = |{\mathcal N}_1| + |{\mathcal N}_2| + 2 |{\mathcal N}_3|  = |{\mathcal N}| + |{\mathcal N}_3'|,
\end{eqnarray*}
where
\[
{\mathcal N}_3' := \{n \in {\mathcal N} : g_1 + \cdots g_n = h_1 + \cdots h_n = 0, n\ \text{is not a square}\}.
\]
Each monomial in ${\mathcal N}$ is the product of $n$ quadratic monomials of the form $a_{g_i}^{(i)} \cdot  a_{h_i}^{(i)}$, where $g_i, h_i \in \ZZ_2$.
For fixed $i$, there are three such monomials, thus we get $|{\mathcal N}| = 3^n$, hence
\[
|{\mathcal M}| = 3^n + |{\mathcal N}_3'|.
\]
Notice that the condition $g_1 + \cdots g_n = h_1 + \cdots h_n = 0$ is equivalent to $g_1+\dots+g_{n-1} = g_n$ and $h_1 + \cdots h_{n-1} = h_n$.
It follows that the dimension of $[A_{n-1}]_{2n}$ equals the sum of $|{\mathcal N}_3'|$ and the number of squares $n \in {\mathcal N}$ with
$g_1 + \cdots + g_{n-1} = g_n$. Since, there are $2^{n-1}$ such squares, we obtain
$ \dim_K [A_{n}]_{2(n+1)} = |{\mathcal M}| = 3^n + \dim_K [A_{n-1}]_{2n} - 2^{n-1}$ ,
as claimed.
\end{proof}

Now we are ready to compute the  number of minimal generators of a
cut ideal associated to a cycle. Possibly, it is not too surprising
that it  has a nice   combinatorial interpretation.

Recall that the Stirling number $S(n,k)$ of the second kind is the number of partitions of a set with $n$ elements into $k$ blocks (cf. \cite{Stan-1999}, page
33). Note that $S(n, 4) = \frac{1}{24}(4^n-4 \cdot 3^n+6 \cdot 2^n-4)$.

\begin{prop}
\label{prop-gen-cut}
If $n\geq 1$, then the cut ideal of an $n+1$-cycle is minimally generated by
 $3 \cdot S(n+1, 4)$ quadratic binomials.
\end{prop}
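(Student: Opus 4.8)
The plan is to reduce the count of minimal generators to a single dimension computation and then to solve the recursion of Lemma~\ref{lem-recursion}. By Lemma~\ref{Cycle-Tree-Corr-Lm} the cut ideal of the $(n+1)$-cycle coincides, up to renaming variables, with the phylogenetic ideal $I_n \subset R$, so I would work with $I_n$, grading each variable $q_{g_1,\dots,g_n}$ in degree one. By Proposition~\ref{prop-cycle} the ideal $I_n$ has a quadratic Gr\"obner basis, hence is generated in degree two. The $2^n$ generators of $R$ map under $\varphi_n$ to pairwise distinct, and therefore linearly independent, monomials, so the linear part $[R]_1 \to R'$ of $\varphi_n$ is injective and $[I_n]_1 = 0$. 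For a homogeneous ideal generated in degree two with $[I_n]_0 = [I_n]_1 = 0$, the degree-two piece of $\mathfrak m I_n$ (where $\mathfrak m = R_+$) vanishes, because $(\mathfrak m I_n)_2 = R_1 [I_n]_1 + R_2 [I_n]_0 = 0$. Consequently the minimal number of generators equals $\dim_K (I_n/\mathfrak m I_n) = \dim_K [I_n]_2$.

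Next I would compute $\dim_K [I_n]_2 = \dim_K [R]_2 - \dim_K [A_n]_2$. The source contributes $\dim_K [R]_2 = \binom{2^n+1}{2}$, while the degree-two part $[A_n]_2$ in the grading on the $q$'s is exactly the graded piece $[A_n]_{2(n+1)}$ in the $a$-grading, whose dimension $d_n$ is governed by Lemma~\ref{lem-recursion}, namely $d_n = d_{n-1} + 3^n - 2^{n-1}$ for $n \geq 2$. I would anchor the recursion at $n=1$, where $A_1 = K[a_0^{(1)}a_0^{(2)},\, a_1^{(1)}a_1^{(2)}]$ is a polynomial ring in two algebraically independent generators, so $d_1 = \dim_K [A_1]_4 = 3$. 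Summing the two geometric series then yields the closed form
\[
d_n = \frac{3^{n+1} - 2^{n+1} + 1}{2}.
\]

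Substituting gives
\[
\mu(I_n) = \binom{2^n+1}{2} - d_n = \frac{4^n - 3\cdot 3^n + 3\cdot 2^n - 1}{2},
\]
and the last step is to identify this with $3\, S(n+1,4)$ by evaluating the stated formula $S(m,4) = \tfrac1{24}(4^m - 4\cdot 3^m + 6\cdot 2^m - 4)$ at $m = n+1$, which is a routine algebraic simplification. I do not expect any single step to be a serious obstacle; the two points requiring care are the passage from \emph{generated in degree two} to $\mu = \dim_K[I_n]_2$ (which rests on $[I_n]_1 = 0$, itself a consequence of the monomial parametrization) and pinning down the correct base case so that the closed form holds for all $n \geq 1$, including the degenerate case $n=1$, where $3\,S(2,4) = 0$ correctly reports that $I_1 = (0)$.
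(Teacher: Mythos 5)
Your proposal is correct and follows essentially the same route as the paper: reduce to the phylogenetic ideal $I_n$ via Lemma \ref{Cycle-Tree-Corr-Lm}, use Proposition \ref{prop-cycle} to know $I_n$ is generated in degree two, compute $\dim_K[R/I_n]_2 = \dim_K[A_n]_{2(n+1)}$ by solving the recursion of Lemma \ref{lem-recursion} with base value $3$ at $n=1$, and match the resulting closed form with $3\,S(n+1,4)$. The only (welcome) addition is your explicit justification that $\mu(I_n)=\dim_K[I_n]_2$ via $[I_n]_1=0$, which the paper leaves implicit.
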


\begin{proof}
By Lemma \ref{Cycle-Tree-Corr-Lm}, it is equivalent to compute the number of minimal generators of the phylogenetic ideal $I_n$ corresponding to the
claw tree $K_{1,n}$. Using the above notation, $I_n$ is an ideal in the polynomial ring $R$ with $2^n$ variables.
We first compute the Hilbert function of the quotient ring $R/I_n$ in degree 2, that is, $h_n(2) := \dim_K [R/I_n]_2$. Since $I_n = \ker \ffi_n$, we
get $h_n (2) = \dim_K [A_n]_{2(n+1)}$. Hence, Lemma \ref{lem-recursion} gives if $n \geq 2$:
\[
h_n(2) = h_{n-1} (2) + 3^n - 2^{n-1}.
\]
Using $h_1(2) = 3$, it follows that
\begin{eqnarray*}
  h_n (2) = h_1 (2) + \sum_{i=2}^n 3^n - \sum_{i=1}^{n} 2^n = \frac{1}{2} [3^{n+1} - 1] - [2^n - 1] = \frac{3}{2} 3^n - 2^n + \frac{1}{2}.
\end{eqnarray*}
Since, by Proposition \ref{prop-cycle}, the ideal $I_n$ is generated in degree two, its number $\mu (I_n)$ of  minimal generators is
\begin{eqnarray*}
  \mu (I_n) = \dim_K [R]_2 - h_n (2) = \binom{2^n + 1}{2} - \frac{3}{2} 3^n + 2^n - \frac{1}{2} = \frac{1}{2} 4^n - \frac{3}{2} 3^n  + \frac{3}{2} 2^n - \frac{1}{2}.
\end{eqnarray*}
It is easily checked that the last number equals $3 \cdot S(n+1, 4)$. The proof is complete.
\end{proof}


\section{Cut ideals of trees}
\label{sec-trees}

As observed by Sturmfels and Sullivant in \cite{StSu}, the algebraic properties of the cut ideal associated to a tree only depend on the number of edges and not on the specific
structure of the tree. Here we use this to determine the $h$-vector. It turns out that its entries admit combinatorial interpretations.

Our starting point is:

\begin{theorem}
\label{thm-trees}
Let $T$ be a tree with $n \geq 1$ edges, and let $X_T \subset {\mathbb P}^{2^n-1}$ be the toric variety defined by the cut ideal $I_T$.
Then $X_T$ is arithmetically Gorenstein of dimension $n$ and degree $n!$. More precisely, $X_T$ is isomorphic to the Segre embedding of
$(\mathbb P^1)^n$ into $\mathbb P^{2^n-1}$ and its Hilbert function is
\begin{align*}
  h_{S_T/I_T} (i) = (i+1)^n  \quad (i \geq 0).
  \end{align*}
\end{theorem}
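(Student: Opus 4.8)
The plan is to reduce the whole statement to one well-understood variety, the Segre embedding of $(\mathbb P^1)^n$, and then read off every invariant from its defining semigroup. Since, as recalled at the start of this section, the cut ideal of a tree depends only on the number of edges, I may replace $T$ by the path $P$ on the vertices $1,2,\ldots,n+1$ with edges $\{j,j+1\}$. This path is an iterated $0$-sum: $P$ is the $0$-sum of the subpath on $\{1,\ldots,n\}$ with the single edge $\{n,n+1\}$ glued along the clique $\{n\}$, and inductively $P$ is the $0$-sum of its $n$ edges. The cut variety of a single edge $K_2$ is $\mathbb P^1$, since the two vertex partitions give the coordinates $t_{12},s_{12}$ and the cut ideal is zero. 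Invoking the fact recalled in Section~\ref{sec-background} that a $0$-sum of graphs corresponds to the Segre product of cut varieties, I obtain $X_T \cong \mathbb P^1 \boxtimes \cdots \boxtimes \mathbb P^1$, the Segre embedding of $(\mathbb P^1)^n$ into $\mathbb P^{2^n-1}$.

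Next I would compute the Hilbert function directly from this description. The coordinate ring $S_T/I_T$ is the Segre product of $n$ copies of $K[x_0,x_1]$, so its degree-$i$ component is the $n$-fold tensor product of $[K[x_0,x_1]]_i$; since the latter has dimension $i+1$, I get $h_{S_T/I_T}(i) = (i+1)^n$ for all $i \geq 0$. This polynomial has degree $n$ and leading coefficient $1$, so $X_T$ has dimension $n$ and degree $n!\cdot 1 = n!$, and the algebra is in particular Hilbertian.

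The remaining and most delicate point is the Gorenstein property. Here I would pass to the semigroup picture: $S_T/I_T$ is the normal semigroup ring of the cone over the unit cube $[0,1]^n$, whose degree-$i$ monomials are indexed by the lattice points of $[0,i]^n$. Normality (from the projective normality of the Segre embedding, or from Lemma~\ref{lem-G-and-CM}) together with Hochster's theorem already yields Cohen-Macaulayness. For the Gorenstein property I would apply the criterion of Stanley: a normal affine semigroup ring is Gorenstein precisely when the monomials lying in the relative interior of the cone form a principal ideal. The interior monomials of degree $i$ are indexed by $\{1,\ldots,i-1\}^n$, and every such point equals $(1,\ldots,1)$ plus a lattice point of $[0,i-2]^n$; hence the interior ideal is generated by the single degree-$2$ element $(1,\ldots,1)$. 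This proves Gorensteinness and simultaneously identifies the canonical module as $\omega \cong (S_T/I_T)(-2)$.

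I expect the last step to be the main obstacle. Matching Hilbert functions, or observing that the $h$-vector is the symmetric sequence of Eulerian numbers, only delivers a necessary condition for Gorensteinness, so the real content lies in exhibiting the principal generator of the interior ideal and checking that $(1,\ldots,1)+B$ exhausts the interior lattice points at every level. I anticipate that setting up the semigroup/cone dictionary for the Segre embedding carefully, rather than the combinatorial verification itself, will require the most attention.
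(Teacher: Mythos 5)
Your argument is correct, and its first half coincides with the paper's: both write the tree as an iterated $0$-sum of single edges, identify $X_T$ with the Segre embedding of $(\mathbb P^1)^n$ (the paper simply cites \cite[Example 2.3]{StSu} for this step), and obtain the Hilbert function $(i+1)^n$ as the product of the Hilbert functions of the factors, from which dimension and degree follow; your detour through the path $P$ is harmless but unnecessary, since one can peel off leaf edges of any tree directly. Where you genuinely diverge is the Gorenstein property. The paper disposes of it with a single citation, Corollary 3.3 of \cite{Hoa}, on Segre products of affine semigroup rings. You instead realize the coordinate ring as the normal semigroup ring of the cone over the unit cube $[0,1]^n$ and apply Stanley's criterion: the interior lattice points at level $i$ are exactly those of $\{1,\ldots,i-1\}^n$, i.e.\ $(1,\ldots,1)$ plus the level-$(i-2)$ points of the semigroup, so the interior ideal is principal, generated in degree $2$. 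This is a complete and correct verification (the cube is a normal lattice polytope, so the semigroup is saturated and the level-$i$ points are all of $[0,i]^n\cap\mathbb Z^n$), and it buys more than the citation does: it exhibits the canonical module as $\omega\cong (S_T/I_T)(-2)$, hence the $a$-invariant $-2$, which is precisely what forces the symmetry $A_{n,k}=A_{n,n+1-k}$ of the $h$-vector that the paper only observes after Proposition \ref{prop-Hilb-tree}. The price is having to set up the semigroup--cone dictionary and check normality explicitly, which the paper's one-line reference avoids.
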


\begin{proof}
Since each tree with $n \geq 2$ edges can be obtained as the zero sum of an edge and a tree with $n-1$ edges, the cut variety $X_T$ is isomorphic to the claimed Segre embedding as shown in
\cite[Example 2.3]{StSu}. This also implies that $X_T$ is arithmetically Gorenstein (\cite{Hoa}, Corollary 3.3).

The Hilbert function of a Segre product is the product of the Hilbert functions of the factors. This gives the claim on the Hilbert function of $X_T$.
\end{proof}

Again, we see that the number of minimal generators of a cut ideal  grows rapidly when the number of edges increases.

\begin{cor}
  \label{cor-gen-tree}
If\; $T$ is a tree with $n$ edges, then its cut ideal is minimally generated by $2 \cdot 4^{n-1} + 2^{n-1} - 3^n$ quadrics.
\end{cor}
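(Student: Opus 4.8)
The plan is to compute the number of minimal generators $\mu(I_T)$ by first determining the Hilbert function of $R/I_T$ in degree $2$, where $R$ is the polynomial ring on the $2^n$ cut coordinates, and then subtracting this from $\dim_K [R]_2$. Since Theorem~\ref{thm-trees} already tells us that the coordinate ring has Hilbert function $h_{S_T/I_T}(i) = (i+1)^n$, I would simply evaluate this at $i=2$ to get $h_{S_T/I_T}(2) = 3^n$. The key structural input I would invoke is that $I_T$ is generated in degree two: this follows from the fact (established via Theorem~\ref{thm-lift-quad} and the zero-sum decomposition of a tree into edges, each of whose cut ideal is trivial) that the generators are obtained by iterated $\Quad$ operations, which produce quadrics.

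Given that $I_T$ is generated in degree two, the number of minimal generators equals the number of independent degree-two relations, namely
\[
\mu(I_T) = \dim_K [R]_2 - h_{S_T/I_T}(2) = \binom{2^n + 1}{2} - 3^n.
\]
The remaining work is purely arithmetic: I would expand $\binom{2^n+1}{2} = \tfrac{1}{2}(2^n+1)2^n = \tfrac{1}{2}4^n + \tfrac{1}{2}2^n$, so that
\[
\mu(I_T) = \tfrac{1}{2} 4^n + \tfrac{1}{2} 2^n - 3^n = 2 \cdot 4^{n-1} + 2^{n-1} - 3^n,
\]
which is exactly the claimed formula. This matches the structure of the analogous computation carried out in the proof of Proposition~\ref{prop-gen-cut} for cycles, where the degree-two Hilbert function was computed via a recursion; here the closed form from Theorem~\ref{thm-trees} replaces that recursion and makes the computation immediate.

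The \emph{main obstacle}, such as it is, lies in justifying that $I_T$ is generated in degree two, since the formula $\mu(I_T) = \dim_K [R]_2 - h_{S_T/I_T}(2)$ is valid only under that hypothesis. I would verify this either by appealing to the Segre structure of $X_T$ (the ideal of the Segre embedding of $(\mathbb{P}^1)^n$ is classically generated by the $2\times 2$ minors of a suitable array, hence by quadrics), or by an induction using Theorem~\ref{thm-lift-quad}: the base case is an edge (whose cut ideal is zero, as a single edge has trivially many cuts), and the inductive step produces only $\Lift$ and $\Quad$ binomials, the latter being quadratic and the former preserving quadratic generation. Once degree-two generation is secured, no further subtlety remains, and the proof is complete after the short arithmetic manipulation above.
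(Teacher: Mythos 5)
Your proposal is correct and follows essentially the same route as the paper: evaluate the Hilbert function from Theorem~\ref{thm-trees} at $i=2$ to get $3^n$, use quadratic generation of $I_T$, and compute $\mu(I_T)=\binom{2^n+1}{2}-3^n=2\cdot 4^{n-1}+2^{n-1}-3^n$. Your explicit justification that $I_T$ is generated in degree two (via the Segre structure or the Lift/Quad induction) is a point the paper leaves implicit, but it is the same underlying argument.
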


\begin{proof}
  The cut ideal $I_T$ lies in a polynomial ring $S_n$ with $2^n$ variables. Using that,  by the above theorem,  $h_{S_n/I_T} (2) = 3^n$  the claim follows.
\end{proof}

The  cut ideal of a tree has a minimal generating set that is even a Gr\"obner basis. Note
that the cut ideal of a tree with one edge is trivial.

\begin{cor}
If\;  $T$ is a tree with a least two edges, then there is a monomial order such that its cut ideal has a quadratic squarefree Gr\"obner
basis. In particular, the corresponding initial ideal is squarefree.
\end{cor}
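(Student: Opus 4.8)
The plan is to induct on the number $n$ of edges of $T$, using that a tree is an iterated $0$-sum together with Theorem \ref{thm-lift-quad}. As recalled in the proof of Theorem \ref{thm-trees}, every tree $T$ with $n \geq 2$ edges is the $0$-sum $T = T' \# e$ of a tree $T'$ with $n-1$ edges and a single edge $e$, the two graphs sharing exactly one vertex; since a single vertex is a clique of size one, this is a $k$-sum with $k = 0$. I would take the single edge as the base of the induction: its cut ideal is trivial, so the empty set is a (vacuously quadratic and squarefree) Gr\"obner basis.

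For the inductive step, assume that $I_{T'}$ has a Gr\"obner basis $\mathbf{F'}$ all of whose elements are differences of squarefree quadratic monomials (for $n-1 = 1$ this is the empty set, and for $n-1 \geq 2$ it is provided by the induction hypothesis), and take the empty generating set for the trivial cut ideal of the edge $e$. Theorem \ref{thm-lift-quad} then furnishes a term order $\prec$ for which
\[
\mathbf{M} = \Lift(\mathbf{F'}) \cup \Quad(T', e)
\]
is a Gr\"obner basis of $I_T$. It then suffices to show that every element of $\mathbf{M}$ is again a difference of squarefree quadratic monomials, for then each initial term is one of these squarefree monomials and the initial ideal $in_\prec(I_T)$ is automatically a squarefree monomial ideal.

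I expect this last verification to be the main obstacle, since it requires unwinding the definitions of $\Lift$ and $\Quad$ from \cite{StSu, Su} in the $0$-sum (Segre product) case. For $\Quad(T', e)$ the binomials are the quadratic Segre relations, which are differences of squarefree quadratic monomials by construction. For $\Lift$, a binomial $q_a q_b - q_c q_d$ with $a \neq b$ and $c \neq d$ is lifted by attaching to each factor an index from the second graph that is compatible along the shared vertex; this operation preserves the degree, and because the distinct first indices $a,b$ (respectively $c,d$) force the lifted variables to be distinct, the lifted monomials remain squarefree. Hence every element of $\mathbf{M}$ has the required form, completing the induction. As a cross-check, a non-inductive route is available: by Theorem \ref{thm-trees}, $X_T$ is the Segre embedding of $(\PP^1)^n$, whose defining ideal classically admits a quadratic squarefree Gr\"obner basis, and since the cut ideals of all trees with $n$ edges agree up to renaming variables, this again yields the claim.
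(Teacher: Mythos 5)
Your proposal is correct and follows essentially the same route as the paper: the paper's proof is a one-line appeal to Theorem \ref{thm-lift-quad} together with the observation (citing Corollary 18 of \cite{Su}) that the Lift and Quad operations preserve the degree and squarefree structure of the binomials, which is exactly the induction on the number of edges via iterated $0$-sums that you spell out. Your added detail on why Lift preserves squarefreeness, and the cross-check via the Segre embedding of $(\PP^1)^n$, go slightly beyond what the paper records but do not change the argument.
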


\begin{proof}
  This follows by Theorem \ref{thm-lift-quad}  as the Lift and Quad operations preserve the squarefree structure and degree of the binomials. (See also Corollary 18 of \cite{Su}.)
\end{proof}

Our next goal is to make the Hilbert series of the cut variety of a tree explicit. We will see  that it admits a combinatorial interpretation.

Recall that the Hilbert series of  a standard graded $K$-algebra $B$ is the formal power series
\[
H_B (t) = \sum_{i \geq 0} h_B (i) t^i.
\]
It is a rational function that can be uniquely written as
\[
H_B (t) = \frac{h_0 + h_1 t + \cdots + h_s t^s}{(1-t)^d},
\]
where $d$ is the (Krull) dimension of $B$,\ $h_0 = 1,h_1,\cdots,h_s$ are integers, and $h_s \neq 0$. If $B$ is Cohen-Macaulay, then all $h_i$ are
non-negative and $(h_0,\cdots,h_s)$ is called the $h$-vector of $B$. For a projective subscheme $X \subset \PP^N$, these concepts are defined by using
its homogeneous coordinate ring $A_X$. However, the {\em Castelnuovo-Mumford regularity} of $X$ is defined as the regularity of its ideal sheaf. Thus,
$\reg X = \reg A_X + 1$.

For a positive integer $n$, denote by ${\mathcal S}_n$  the symmetric group on $n$ letters. The $n$-th {\em Eulerian polynomial} $A_n$ is defined as
\[
A_n (t)  := \sum_{\sigma \in {\mathcal S}_n} t^{1 + d(\sigma)},
\]
where $d(\sigma)$ is the number of descents of the permutation $\sigma$ (see, e.g., \cite{Stan-1999}, page 22). Writing
\[
A_n (t) = A_{n, 1} t + \cdots + A_{n, n} t^n,
\]
the coefficients $A_{n, k}$ are called {\em Eulerian numbers}. Like binomial coefficients, they satisfy a recurrence relation:
\begin{align*}
A_{n,i+1} = (n-i) A_{n-1,i} + (i+1) A_{n-1,i+1}.
\end{align*}

For trees, the above concepts are related:

\begin{prop}
\label{prop-Hilb-tree}
Let $T$ be a tree with $n \geq 1$ edges. Then the Hilbert series of its cut variety $X_T$ is
\begin{align*}
 H_{X_T} (t)=  \frac{A_{n,1} + A_{n,2} t+ \dots + A_{n,n}t^{n-1} }{ (1-t)^{n+1} }.
\end{align*}
Moreover, the Castelnuovo-Mumford regularity is $\reg X_T = n.$
\end{prop}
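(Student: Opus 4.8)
The plan is to read off the Hilbert series from the Hilbert function already computed in Theorem~\ref{thm-trees}, recognize the resulting power series as the Eulerian generating function, and then extract the regularity from the Cohen--Macaulay (indeed Gorenstein) property. First I would invoke Theorem~\ref{thm-trees}, which gives $h_{X_T}(i) = (i+1)^n$ for all $i \geq 0$, so that by definition
\[
H_{X_T}(t) = \sum_{i \geq 0} (i+1)^n\, t^i.
\]
The whole content of the first assertion is then the evaluation of this series.

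Concretely, I would verify the classical identity
\[
\sum_{i \geq 0} (i+1)^n\, t^i = \frac{A_{n,1} + A_{n,2}t + \cdots + A_{n,n}t^{n-1}}{(1-t)^{n+1}}.
\]
Putting $m = i+1$, this is equivalent to the Worpitzky-type generating function $\sum_{m \geq 1} m^n t^m = A_n(t)/(1-t)^{n+1}$ for the Eulerian polynomial, after dividing by $t$. I would prove the latter by induction on $n$ using the recurrence for the Eulerian numbers recalled just above: the base case $n=1$ reads $\sum_{m \geq 1} m\, t^m = t/(1-t)^2$, matching $A_1(t) = t$; for the inductive step one applies the operator $t\tfrac{d}{dt}$, which sends $\sum_{m \geq 1} m^{n-1} t^m$ to $\sum_{m \geq 1} m^n t^m$, and checks that applying this operator to $A_{n-1}(t)/(1-t)^n$ reproduces precisely the recurrence $A_{n,i+1} = (n-i)A_{n-1,i} + (i+1)A_{n-1,i+1}$. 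Dividing by $t$ and reindexing then drops the numerator degree from $n$ (in $A_n(t)$) to $n-1$, yielding exactly the stated form.

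For the regularity I would use that $X_T$ is arithmetically Cohen--Macaulay (in fact arithmetically Gorenstein) by Theorem~\ref{thm-trees}, so its coordinate ring $A_{X_T}$ is a Cohen--Macaulay standard graded algebra of Krull dimension $n+1$ whose Hilbert series is the rational function above with numerator $Q(t)$ of degree exactly $n-1$; here $A_{n,n} = 1$, since it counts the unique permutation with $n-1$ descents, so the top coefficient does not vanish. For a Cohen--Macaulay graded algebra the Castelnuovo--Mumford regularity of the coordinate ring equals the degree of the numerator of its Hilbert series, equivalently $\reg A_{X_T} = a(A_{X_T}) + \dim A_{X_T} = (\deg Q - (n+1)) + (n+1) = n-1$. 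Invoking the convention $\reg X_T = \reg A_{X_T} + 1$ recorded earlier then gives $\reg X_T = n$.

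I expect the only genuine step to be the Eulerian generating-function identity; the regularity is immediate once the degree of the $h$-polynomial is known, since the ring has already been shown to be Cohen--Macaulay. The single point requiring care is the index shift $m = i+1$, which is responsible for the numerator degree being $n-1$ rather than $n$ and hence for the exact value $\reg X_T = n$.
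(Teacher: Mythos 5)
Your proposal is correct and follows essentially the same route as the paper: read off $H_{X_T}(t)=\sum_{i\ge 0}(i+1)^n t^i$ from Theorem \ref{thm-trees}, identify it via the Eulerian generating function $\sum_{m\ge 1} m^n t^m = A_n(t)/(1-t)^{n+1}$ after dividing by $t$, and deduce the regularity from the Cohen--Macaulay property and the degree $n-1$ of the $h$-polynomial. The only difference is that you supply an inductive proof of the Eulerian identity (which checks out against the stated recurrence) where the paper simply cites Stanley.
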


\begin{proof}
By Theorem \ref{thm-trees}, we know that the Hilbert series of $X$ is
\[
H_X (t) = \sum_{i\geq 0}{(i+1)^n t^i}.
\]
It is known (see, for example,  \cite{Stan-1999}, page 209) that the Eulerian polynomials satisfy
\begin{align*}
\sum_{i\geq 0} i^nt^i = \frac{A_n(t)}{(1-t)^{n+1}}.
\end{align*}
Dividing by $t$ provides the desired formula for the Hilbert series of $X$.

Finally, since $X$ is arithmetically Cohen-Macaulay, the regularity of its homogeneous coordinate ring is the degree of the numerator polynomial in the
Hilbert series.  Hence $A_{n,n}= 1$ provides $\reg X  = n$.
\end{proof}

As $X_T$ is arithmetically Gorenstein, its $h$-vector is symmetric, that is $ A_{n,k} = A_{n,n+1-k}$. This also follows directly form the interpretation of the
Eulerian number $A_{n,k}$ as the number of permutations in ${\mathcal S}_n$ with $k-1$ excedances
(see \cite{Stan-1999}, Proposition 1.3.12).

\section{Disjoint unions}
\label{sec-unions}

We want to show that the cut ideal of a disjoint union of two graphs can be studied by means of their zero-sum.

We need a general fact:

\begin{lm}
  \label{lem-doubling}
Let $R= K[x_1,\ldots,x_n]$ and $S := K[x_1,\ldots,x_n, y_1,\ldots,y_n]$ be polynomial rings in $n$ and $2n$ variables, respectively.
Let $\psi: R \to T$ be any $K$-algebra homomorphism and consider the homomorphism $\ffi: S \to T$ that is defined by
$\ffi (y_i) = \ffi (x_i) := \psi (x_i)$, \; $i = 1,\ldots,n$. Set $I := \ker \ffi$ and $J := \ker \psi$.
Then
\[
I = J \cdot S + (x_1 - y_1,\ldots,x_n - y_n)
\]
and  $S/I \cong R/J$.

Moreover, if, for some monomial order on $R$, ${\bf F}$ is a Gr\"obner basis of $J$, then  ${\bf F} \cup \{x_1-y_1,\ldots,x_n -
y_n\}$ is a Gr\"obner basis of $I$ with respect to some monomial order on $S$.
\end{lm}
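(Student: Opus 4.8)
The plan is to establish the two claims separately, starting with the ideal-theoretic identity and the resulting isomorphism, and then deducing the Gröbner basis statement from it.

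\medskip

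\noindent\textbf{Step 1: The identity $I = J\cdot S + (x_1-y_1,\ldots,x_n-y_n)$.} First I would prove the inclusion ``$\supseteq$''. Every generator of $J$ is a polynomial in $R \subset S$ that lies in $\ker\psi$; since $\ffi|_R = \psi$ by construction, each such generator is killed by $\ffi$, so $J\cdot S \subseteq I$. Moreover $\ffi(x_i - y_i) = \psi(x_i) - \psi(x_i) = 0$, so each $x_i - y_i \in I$, giving ``$\supseteq$''. For ``$\subseteq$'', let $f \in I$. Working modulo the linear ideal $L := (x_1-y_1,\ldots,x_n-y_n)$, I would reduce $f$ by substituting $y_i \mapsto x_i$; this produces a polynomial $\bar f \in R$ with $f \equiv \bar f \pmod{L}$. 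Applying $\ffi$ to the congruence $f = \bar f + (\text{element of } L)$ and using $\ffi(f)=0$ together with $L \subseteq I$ gives $\ffi(\bar f) = 0$; but $\ffi(\bar f) = \psi(\bar f)$ since $\bar f \in R$, so $\bar f \in J$. Hence $f \in J\cdot S + L$, as desired.

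\medskip

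\noindent\textbf{Step 2: The isomorphism $S/I \cong R/J$.} This is essentially formal once the identity is in hand. The composite $R \hookrightarrow S \twoheadrightarrow S/I$ is surjective because each $y_i$ is congruent to $x_i$ modulo $I$, so every element of $S/I$ is represented by a polynomial in the $x_i$ alone. Its kernel is $R \cap I$, which by Step 1 equals $J$ (one checks that reducing a polynomial of $R$ modulo $L$ leaves it unchanged, so $R \cap I = R \cap (J\cdot S + L) = J$). The induced map $R/J \to S/I$ is thus an isomorphism of $K$-algebras.

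\medskip

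\noindent\textbf{Step 3: The Gröbner basis.} Let ${\bf F}$ be a Gröbner basis of $J$ for a monomial order $<_R$ on $R$. I would choose on $S$ an elimination order $<_S$ that eliminates the $y$-variables: any order with $y_i > (\text{all monomials in the } x\text{'s})$, refined by $<_R$ on the $x$-part, will do. With respect to such an order the leading term of $x_i - y_i$ is $y_i$, so the leading terms of the proposed basis are the $y_i$ together with the leading terms $\operatorname{in}_{<_R}(g)$ for $g \in {\bf F}$, which involve only $x$-variables. To verify the Gröbner property I would run Buchberger's criterion: $S$-pairs among the $x_i - y_i$ reduce to zero since their leading terms $y_i$ are pairwise coprime; $S$-pairs between $x_i - y_i$ and a $g \in {\bf F}$ reduce to zero because the leading terms $y_i$ and $\operatorname{in}(g)$ are coprime (the former uses only $y$, the latter only $x$); and the $S$-pairs among elements of ${\bf F}$ reduce to zero by the hypothesis that ${\bf F}$ is already a Gröbner basis of $J$ over $R$, the extra linear generators not interfering since their leading monomials are $y$'s.

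\medskip

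\noindent The main obstacle is Step 3, specifically checking that no genuinely new $S$-polynomial obstructions arise from mixing the linear binomials with ${\bf F}$. The cleanest route is to exploit coprimality of leading terms: since every $x_i-y_i$ has leading monomial $y_i$ and every $g\in{\bf F}$ has leading monomial in the $x$-variables, Buchberger's first criterion (coprime leading terms) disposes of all mixed pairs immediately, reducing the verification to the already-known Gröbner property of ${\bf F}$. This avoids any delicate reduction computations.
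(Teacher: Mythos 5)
Your proof is correct and follows essentially the same route as the paper: the key device in both is the retraction $S \to R$, $y_i \mapsto x_i$ (your reduction modulo $L$ is the element-wise version of the paper's commutative diagram and exact sequence $0 \to L \to I \to J \to 0$), and both conclude the Gr\"obner basis claim via an elimination order. The only difference is one of detail: the paper simply asserts that the elimination order works, while you explicitly verify Buchberger's criterion using coprimality of the leading terms $y_i$ with the $x$-supported leading terms of ${\bf F}$.
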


\begin{proof}
  This is probably well-known to  specialists. For the convenience   of the reader we provide a short proof.
The $K$-algebra homomorphism $\gamma: S \to R$ that maps $x_i$ and $y_i$ onto $x_i$ induces the following commutative diagram with
exact rows and column
\begin{equation*}
\begin{CD}
& & & &  0 \\
& & & &  @VVV \\
& & & & L \\
& & & &  @VVV \\
0 @>>> I @>>>S @>{\ffi}>> T \\
& & @VVV @VV{\gamma}V  @VV{=}V\\
0 @>>> J @>>> R @>{\psi}>> T,  \\
& & & &  @VVV \\
& & & &  0 \\
\end{CD}
\end{equation*}
where $L$ is the ideal $L := (x_1 - y_1,\ldots,x_n - y_n)$. Thus, we
get an exact sequence
\[
0 \to L \to I \to J \to 0.
\]
Using also the natural embedding of $R$ as a subring of $S$, we conclude that $I = J \cdot S + L$, as desired.

The claim about the Gr\"obner bases follows by using an elimination order on $S$ that extends the term order on $R$ used for computing
the Gr\"obner basis ${\bf F}$ with the property that each variable $y_1 > y_2 > \cdots > y_n$ is greater than any monomial in $R$.
\end{proof}

Let $G_1$ and $G_2$ be (non-empty) graphs on $v_1$ and $v_2$ vertices.  Consider the zero-sum $G_0:=G_1 \# G_2$ obtained by
joining the two graphs at any vertex. Its cut ideal lies in a polynomial ring $R$ with $2^{v_1+v_2 - 2}$ variables. The disjoint
union of the two graphs $G_\sqcup :=G_1 \sqcup G_2$ defines a cut ideal in a polynomial ring $S$ in $2^{v_1+v_2 - 1}$ variables. Using
this notation, the main result of this section is:

\begin{prop}
\label{prop-disjoint}
There is an injective, graded $K$-algebra homomorphism $\alpha: R \to S$ mapping the variables of $R$ onto variables of $S$ such that
\[
I_{G_1 \sqcup G_2} = \alpha( I_{G_0}) + L,
\]
where $L \subset S$ is an ideal that is minimally generated by $2^{v_1+v_2 - 2}$ linear forms.

Furthermore, the cut variety $X_{G_1 \sqcup G_2} \subset \PP^{2^{v_1+v_2-1}-1}$ is isomorphic to the Segre embedding of  $X_{G_1} \times X_{G_2}$
into $\PP^{2^{v_1+v_2-2}-1}$.
\end{prop}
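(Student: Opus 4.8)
The plan is to recognize $I_{G_1 \sqcup G_2}$ as a ``doubling'' of $I_{G_0}$ in the precise sense of Lemma \ref{lem-doubling}, so that everything follows once the combinatorics of the partitions is sorted out. Write $V_1, V_2$ for the vertex sets of $G_1, G_2$, and let $v^{(1)} \in V_1$ and $v^{(2)} \in V_2$ be the two vertices that are identified to a single vertex $v$ in the zero-sum $G_0 = G_1 \# G_2$. The maps $\phi_{G_0}$ and $\phi_{G_1 \sqcup G_2}$ have the \emph{same} target $T := K[s_{ij}, t_{ij}]$, because both graphs have edge set $E(G_1) \sqcup E(G_2)$. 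The first thing to record is the multiplicativity of the parametrization: since neither gluing nor separating a single vertex creates new edges, and no edge joins $V_1$ to $V_2$, every partition $A|B$ satisfies $Cut(A|B) = Cut_{G_1}(A\cap V_1 \,|\, B\cap V_1) \sqcup Cut_{G_2}(A\cap V_2 \,|\, B\cap V_2)$. Hence both $\phi_{G_0}$ and $\phi_{G_1\sqcup G_2}$ factor as the product of the associated monomials of $\phi_{G_1}$ and $\phi_{G_2}$, and the $\phi_{G_2}$-factor is unchanged if its two blocks are swapped.

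Next I would exhibit the fixed-point-free involution on the variables of $S$. Normalizing each unordered partition of $V_1 \sqcup V_2$ so that $v^{(1)}$ lies in the first block, the $2^{v_1+v_2-1}$ variables of $S$ are indexed by a pair consisting of an unordered partition $A_1|B_1$ of $V_1$ (with $v^{(1)} \in A_1$) and an ordered partition $(A_2, B_2)$ of $V_2$. Define $\iota$ by moving all of $V_2$ to the opposite side, i.e. sending $(A_1 \sqcup A_2) | (B_1 \sqcup B_2)$ to $(A_1 \sqcup B_2)|(B_1 \sqcup A_2)$. By the multiplicativity above, $\phi_{G_1\sqcup G_2}$ takes equal values on a variable and its $\iota$-image, so each difference $q_{A|B} - q_{\iota(A|B)}$ lies in $I_{G_1\sqcup G_2}$; moreover $\iota$ has no fixed point, since $A_2 = B_2$ would force $V_2 = \emptyset$. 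Thus $\iota$ pairs the variables of $S$ into $2^{v_1+v_2-2}$ orbits, each orbit containing a unique representative with $v^{(2)}$ on the same side as $v^{(1)}$. These ``compatible'' representatives are exactly the partitions that descend to $G_0$, hence are in bijection with the variables of $R$; I define $\alpha$ to send each variable of $R$ to its compatible representative in $S$. This $\alpha$ is injective, graded, and sends variables to variables.

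With this setup $S$ is precisely the doubling of $R$: writing $x_p = \alpha(q_p^{G_0})$ and $y_p = q_{\iota(p)}^{G_1\sqcup G_2}$, the factorization gives $\phi_{G_1\sqcup G_2}(x_p) = \phi_{G_1\sqcup G_2}(y_p) = \phi_{G_0}(q_p^{G_0})$. Applying Lemma \ref{lem-doubling} with $\psi = \phi_{G_0}$ and $\phi = \phi_{G_1\sqcup G_2}$ then yields $I_{G_1\sqcup G_2} = \alpha(I_{G_0})\,S + L$ with $L = (x_p - y_p)$, together with a graded isomorphism $S/I_{G_1\sqcup G_2} \cong R/I_{G_0}$. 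The generators of $L$ have pairwise disjoint supports, so they are linearly independent and form a minimal generating set of $2^{v_1+v_2-2}$ linear forms; this is the first assertion. For the second, $R/I_{G_0}$ is the coordinate ring of $X_{G_0}$, and since $G_0$ is the zero-sum $G_1 \# G_2$, the remarks preceding Lemma \ref{lem-Segre-CM} identify $X_{G_0}$ with the Segre embedding of $X_{G_1}\times X_{G_2}$ into $\PP^{2^{v_1+v_2-2}-1}$. The isomorphism $S/I_{G_1\sqcup G_2}\cong R/I_{G_0}$, realized by the coordinate subspace cut out by $L$, then identifies $X_{G_1 \sqcup G_2}$ with that same Segre embedding.

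I expect the main obstacle to be the bookkeeping in the combinatorial step: correctly normalizing the unordered partitions, verifying that $\iota$ is fixed-point free, and checking that its orbits are in natural bijection with the partitions of $G_0$ via the ``compatible'' representatives. Once this is in place, the algebraic conclusions are a direct application of Lemma \ref{lem-doubling} and of the known description of the zero-sum as a Segre product.
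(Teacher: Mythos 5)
Your proposal is correct and follows essentially the same route as the paper: you identify the variables of $S$ indexed by partitions with $x,y$ on the same side with the variables of $R$, observe that the fixed-point-free involution swapping the $G_2$-side of a partition preserves the cut set (the paper's bijection $\varepsilon: \cP_1 \to \cP_2$), and then invoke Lemma \ref{lem-doubling} together with the identification of the $0$-sum with the Segre product. The only cosmetic difference is that the paper routes the identification of targets through an explicit edge bijection $\wb$ and a commutative square, whereas you treat the two parametrizing rings as literally equal; both are fine.
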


\begin{proof}
To simplify notation, set $G_\sqcup := G_1 \sqcup G_2$. Moreover, denote the polynomial rings that are used to define cut ideals of $G_\sqcup$ and the
zero sum $G_0$ by $S, S', R, R'$, that is, $I_{G_\sqcup}$ is the kernel of $\ffi_{G_\sqcup}: S \to S'$ and $I_{G_0}$ is the kernel of $\ffi_{G_0}: R \to R'$.

Let $x \in V(G_1)$ and $y \in V(G_2)$ be the vertices of $G_1$ and $G_2$ that are identified in the $0$-sum $G_0$. It will be convenient to denote the
resulting vertex in $G_0$ by $z$.

There is a natural bijection $\wb: E(G_0) \to E(G_\sqcup)$, defined by
\[
\{i, j\} \mapsto \wb (\{i, j\}) := \left \{
\begin{array}{ll}
  \{i, j\} & \mif z \notin \{i, j\} \\
\{i, x\} & \mif j = z, i \in G_1 \\
\{i, y\} & \mif j = z, i \in G_2
\end{array}.
\right.
\]
It induces an isomorphism $\beta: R' \to R$.

Now consider any unordered
partition $A|B$ of the vertex set of $G_0$. We may assume that $z \in A$. Then we define a partition $A'|B$ of the vertex set of $G_\sqcup$ by setting
$A' := (A \setminus \{z\}) \cup \{x, y\}$. This induces an injective $K$-algebra homomorphism $\alpha: R \to S$ that maps the variable $q_{A|B} \in R$
onto the corresponding variable $q_{A'|B} \in S$.

Observing that $\wb$ maps $Cut (A|B)$ onto $Cut (A'|B)$, we get a commutative diagram
\begin{equation*}
\begin{CD}
R @>{\ffi_{G_{0}}}>> R' \\
 @VV{\alpha}V  @VV{\beta}V\\
S @>{\ffi_{G_{\sqcup}}}>> S'.  \\
\end{CD}
\end{equation*}
Since $\alpha$ is injective, it follows immediately that $J := \alpha (I_{G_0}) \subset I_{G\sqcup}$.

We now consider the set $\cP$ of partitions of the vertex set of $G_\sqcup$. We decompose it as \[
\cP = \cP_1 \sqcup \cP_2,
\]
where
\[
\cP_1 := \{ A|B \in \cP : x, y \in A\}
\]
and
\[
\cP_2 := \{A|B \in \cP : x\in A, \ y \in B\}
\]
Given a partition  $A|B \in \cP_1$, define sets $C, D$:
\begin{eqnarray*}
  C & := & (A \cap V(G_1)) \cup (B \cap V(G_2)) \\[1ex]
D  & :=  & (B \cap V(G_1)) \cup (A \cap V(G_2)).
\end{eqnarray*}
Then $C|D$ is a partition in $\cP_2$, thus we get a map $\varepsilon: \cP_1 \to \cP_2$.
Note that this map is bijective, thus $|\cP_1| = \frac{1}{2} |\cP| = 2^{v_1 + v_2-1}$.
Moreover, the cut sets of $A|B \in \cP_1$ and $C|D = \varepsilon (A|B)$ are the same.
Using also that $\alpha$ maps the variables in $R$ onto variables in $S$ indexed by partitions in $\cP_1$, Lemma \ref{lem-doubling} provides
\begin{equation}
\label{eq-disjoint}
  I_{G_\sqcup} = J + (q_{A|B} - q_{\varepsilon(A|B)} : A|B \in \cP_1)
\end{equation}
and an isomorphisms between the homogeneous coordinate rings of the cut varieties defined by $G_\sqcup$ and $G_0$.
Since taking $0$-sums corresponds to forming Segre products, it follows that $X_{G_\sqcup} \cong X_{G_1} \times X_{G_2}$, and the proof is complete.
\end{proof}

The above proof also implies:

\begin{cor}
\label{cor-Gr-disjoint-u}
If the  cut ideals of $G_1$ and $G_2$ admit a squarefree Gr\"obner basis, then so does the cut ideal of their disjoint union.
\end{cor}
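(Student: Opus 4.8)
The plan is to reduce the statement to the zero-sum $G_0 := G_1 \# G_2$ and then transport a squarefree Gr\"obner basis from $I_{G_0}$ to $I_{G_1 \sqcup G_2}$ by means of the doubling structure already exhibited in the proof of Proposition \ref{prop-disjoint}. So the argument proceeds in two transfers: first along the $0$-sum, then along the variable doubling.

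First I would build a squarefree Gr\"obner basis for the cut ideal of the zero-sum $G_0$. Since $G_0$ is the $0$-sum of $G_1$ and $G_2$, Theorem \ref{thm-lift-quad} applies directly: starting from squarefree binomial Gr\"obner bases ${\bf F_1}$ and ${\bf F_2}$ of $I_{G_1}$ and $I_{G_2}$, the set ${\bf M} = \Lift({\bf F_1}) \cup \Lift({\bf F_2}) \cup \Quad(G_1, G_2)$ is a Gr\"obner basis of $I_{G_0}$ for a suitable term order. Exactly as in the tree case treated above, the essential point is that the Lift and Quad operations preserve both the degree and the squarefree structure of the binomials (cf.\ Corollary 18 of \cite{Su}), so every element of ${\bf M}$ is again a difference of squarefree monomials. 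Hence $I_{G_0}$ admits a squarefree Gr\"obner basis.

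Next I would invoke the doubling mechanism. By \eqref{eq-disjoint} we have $I_{G_1 \sqcup G_2} = \alpha(I_{G_0}) + (q_{A|B} - q_{\varepsilon(A|B)} : A|B \in \cP_1)$, which is precisely the setting of Lemma \ref{lem-doubling}, already used to prove Proposition \ref{prop-disjoint}. By the final assertion of that lemma, if ${\bf F}$ is a Gr\"obner basis of $\alpha(I_{G_0})$, then ${\bf F} \cup \{q_{A|B} - q_{\varepsilon(A|B)} : A|B \in \cP_1\}$ is a Gr\"obner basis of $I_{G_1 \sqcup G_2}$ with respect to a suitable elimination order on $S$. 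Taking ${\bf F} = \alpha({\bf M})$ from the first step, both families of generators are differences of squarefree monomials: the elements of $\alpha({\bf M})$ because $\alpha$ sends variables to variables and ${\bf M}$ is squarefree, and the linear binomials $q_{A|B} - q_{\varepsilon(A|B)}$ trivially, being differences of single variables. Therefore the whole resulting Gr\"obner basis of $I_{G_1 \sqcup G_2}$ is squarefree, as claimed.

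The hard part is already absorbed into the cited results, so there is no serious obstacle here; the only point requiring genuine care is verifying that squarefreeness survives \emph{both} transfers. Along the $0$-sum this rests on the degree- and squarefree-preserving behavior of Lift and Quad, and along the doubling it rests on the two elementary observations above about $\alpha$ and the linear binomials. Once these are in hand, the conclusion follows immediately.
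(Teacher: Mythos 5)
Your proof is correct and follows essentially the same route as the paper: reduce to the $0$-sum $G_0$ via Theorem \ref{thm-lift-quad} (using that Lift and Quad preserve squarefreeness), then transfer the Gr\"obner basis to the disjoint union via the second assertion of Lemma \ref{lem-doubling} applied to Equation (\ref{eq-disjoint}). You simply spell out the two transfer steps that the paper's two-line proof leaves implicit.
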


\begin{proof}
The assumption implies that the cut ideal of the zero sum of $G_1$ and $G_2$ admits a squarefree Gr\"obner basis. Hence, using the second assertion of
Lemma \ref{lem-doubling}, Equation (\ref{eq-disjoint}) provides the claim.
\end{proof}

Now we address the transfer of the Cohen-Macaulay property under forming disjoint unions.

\begin{cor}
  \label{cor-unions}
Let $G_1$ and $G_2$ be two graphs such  that that their cut varieties are arithmetically Cohen-Macaulay. Then the cut variety
associated to the disjoint union of $G_1$ and $G_2$ is arithmetically Cohen-Macaulay if and only if the two varieties
defined by $G_1$ and $G_2$ are  Hilbertian.
\end{cor}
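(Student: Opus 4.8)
The plan is to reduce the statement to the Segre-product criterion of Lemma~\ref{lem-Segre-CM}. The essential input is Proposition~\ref{prop-disjoint}, which identifies the homogeneous coordinate ring of the cut variety of the disjoint union with that of the zero-sum: writing $G_0 := G_1 \# G_2$ for the zero-sum, it provides a graded isomorphism $A_{X_{G_1 \sqcup G_2}} \cong A_{X_{G_0}}$. Since forming a zero-sum corresponds algebraically to taking a Segre product, as recalled in the introduction, this yields
\[
A_{X_{G_1 \sqcup G_2}} \cong A_{X_{G_0}} = A_{X_{G_1}} \boxtimes A_{X_{G_2}}
\]
as graded $K$-algebras. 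Thus the arithmetic Cohen-Macaulayness of $X_{G_1 \sqcup G_2}$ is equivalent to the Cohen-Macaulayness of the Segre product $A_{X_{G_1}} \boxtimes A_{X_{G_2}}$.

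With this identification in hand the argument is immediate. By hypothesis $A_{X_{G_1}}$ and $A_{X_{G_2}}$ are Cohen-Macaulay graded $K$-algebras, so I would simply invoke Lemma~\ref{lem-Segre-CM}: their Segre product is Cohen-Macaulay if and only if both factors are Hilbertian. Combined with the displayed isomorphism, this gives precisely the claimed equivalence, with both the ``if'' and ``only if'' directions coming for free from the biconditional in the lemma.

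The one hypothesis of Lemma~\ref{lem-Segre-CM} that must be checked separately, and the only real obstacle, is that each factor has Krull dimension at least two. I would verify this by observing that a graph $G$ with at least one edge has a cut variety of positive dimension. Indeed, the trivial partition $\emptyset \mid V(G)$ maps under $\phi_G$ to the monomial $\prod_{e \in E(G)} t_e$, in which no edge is separated, whereas any partition separating some edge $\{i,j\}$ maps to a monomial divisible by $s_{ij}$; these two exponent vectors have disjoint support, hence are linearly independent. Therefore the exponent matrix of $\phi_G$ has rank at least two, so $X_G$ has dimension at least one and $A_{X_G}$ has Krull dimension at least two. (Graphs consisting only of isolated vertices are excluded by the standing nonemptiness assumption on $G_1$ and $G_2$, for which the statement is in any case degenerate.) Once the dimension condition is secured, the two observations above combine directly to give the corollary.
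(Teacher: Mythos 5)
Your proof is correct and is essentially the paper's own argument: reduce to the Segre product $A_{X_{G_1}} \boxtimes A_{X_{G_2}}$ via Proposition \ref{prop-disjoint} and apply Lemma \ref{lem-Segre-CM} after verifying the dimension hypothesis. Two small remarks: the paper's ``non-empty'' assumption does not exclude an edgeless $G_i$, so that case is not ruled out but must be (and in the paper is) handled separately --- there the coordinate ring is $K[x]$, of Krull dimension one, and the Segre product collapses to the other factor --- and your two exponent vectors need not have disjoint support when $G$ has more than one edge, although they are still linearly independent since they are distinct and have equal coordinate sums.
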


\begin{proof}
Denote by $T, T_1$, and $T_2$ the homogeneous coordinate rings  of the cut varieties associated to $G_1 \sqcup G_2, G_1$, and $G_2$,
respectively. Proposition \ref{lem-doubling} provides that $T$ is isomorphic to the Segre product $T_1 \boxtimes T_2$. If, say, $G_1$
does not have any edge, then $T_1$ is isomorphic to the a polynomial ring in one variable. Thus it is Hilbertian and $T$ is isomorphic to $T_2$.

If both graphs $G_1$ and $G_2$ have at least one edge, the Krull dimension of $T_1$ and $T_2$ is at least two. Then the claim is a
consequence of Lemma \ref{lem-Segre-CM}.
\end{proof}

Recall that a {\em forest} is a disjoint union of trees. Thus we get:

\begin{cor}
  \label{cor-forest}
The cut variety defined by any forest is arithmetically Gorenstein.
\end{cor}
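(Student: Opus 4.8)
The plan is to reduce the disjoint union to a single tree by means of the Segre-product description and then to quote Theorem \ref{thm-trees}. Write the forest as $F = T_1 \sqcup \cdots \sqcup T_m$, where $T_i$ is a tree with $n_i$ edges, and set $N := n_1 + \cdots + n_m$ for the total number of edges. Denote by $P := K[u_0, u_1]$ the homogeneous coordinate ring of $\mathbb{P}^1$. By Theorem \ref{thm-trees}, $X_{T_i}$ is the Segre embedding of $(\mathbb{P}^1)^{n_i}$, so its coordinate ring $A_{X_{T_i}}$ is the $n_i$-fold Segre product $P^{\boxtimes n_i}$.

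First I would assemble the components. Applying Proposition \ref{prop-disjoint} repeatedly (inducting on $m$, and writing $F = T_1 \sqcup (T_2 \sqcup \cdots \sqcup T_m)$ at each step), I obtain
\[
A_{X_F} \cong A_{X_{T_1}} \boxtimes \cdots \boxtimes A_{X_{T_m}} \cong P^{\boxtimes n_1} \boxtimes \cdots \boxtimes P^{\boxtimes n_m}.
\]
Since the Segre product of graded $K$-algebras is associative, the right-hand side collapses to $P^{\boxtimes N}$. But $P^{\boxtimes N}$ is precisely the coordinate ring of the Segre embedding of $(\mathbb{P}^1)^N$, which by Theorem \ref{thm-trees} is the cut variety of an arbitrary tree with $N$ edges. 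Thus $X_F$ has the same homogeneous coordinate ring as the cut variety of a single tree with $N$ edges.

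The conclusion then follows at once: Theorem \ref{thm-trees} asserts that the cut variety of a tree is arithmetically Gorenstein, so $X_F$, sharing its coordinate ring with such a tree, is arithmetically Gorenstein as well. I do not expect any serious obstacle in this argument, since the associativity of $\boxtimes$ does all the work; the only point requiring care is the bookkeeping for isolated vertices, i.e.\ tree components with no edges. Such a component contributes a polynomial ring in one variable, which is one-dimensional in every degree and hence acts as a unit for the Segre product ($Q \boxtimes B \cong B$ whenever $\dim_K [Q]_i = 1$ for all $i$). Consequently isolated vertices may be deleted without altering $A_{X_F}$, and the reduction to a single tree with $N$ edges goes through unchanged.
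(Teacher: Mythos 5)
Your proof is correct and is essentially the paper's own argument: the paper's one-line proof simply combines Theorem \ref{thm-trees} with Proposition \ref{prop-disjoint}, and your write-up fills in exactly that combination (iterated Segre products collapsing to $P^{\boxtimes N}$, plus the observation, also made in the proof of Corollary \ref{cor-unions}, that edgeless components act as units for $\boxtimes$). No substantive difference.
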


\begin{proof}
This follows immediately by  combining Theorem \ref{thm-trees} and Proposition \ref {prop-disjoint}.
\end{proof}


\section{Cut ideals of ring graphs}
\label{sec:sp-graphs}

We are ready to analyze cut ideals of more complicated graphs using our results for cut ideals of trees and cycles. In order to describe the graphs, we need additional vocabulary.

A vertex $v$ of a graph $G$ is called a \emph{cutvertex} if the number of connected components of the vertex-induced subgraph on $V(G) \setminus \{v\}$
is larger than that of $G$.
Similarly, an edge $e$ is called a \emph{bridge} if the number of connected components of $G\backslash \{e\}$ is larger than that of $G$.
A \emph{block} of $G$ is a maximal connected subgraph of $G$ without cut vertices.
It follows that each block of a graph is either an isolated vertex, a bridge, or a maximal $2$-connected subgraph.

\begin{defn}[\cite{VillEtAl}, \cite{VillEtAl2}]  \rm
    A \emph{ring graph} is a graph $G$ with the property that each block of $G$ which is not a bridge or a vertex can be constructed from a cycle
    by successively adding cycles of length at least $3$ using the edge-sum ($1$-sum) construction.
\end{defn}
Gitler, Reyes and Villarreal study the family of graphs whose number of primitive cycles equals its cycle rank.  They show that this family is precisely the family of ring graphs. Ring graphs were first introduced in \cite{VillEtAl} and \cite{VillEtAl2}. They are a subclass of series-parallel graphs. For further background on series-parallel graphs, the reader should refer to \cite{Di}.

Examples of ring graphs include trees and cycles.  More precisely, ring graphs are those graphs that can be obtained from
trees and cycles by performing clique sums over vertices or edges. They form a large subclass of series parallel graphs, since they are free
of $K_4$-minors (\cite{VillEtAl}).

Combining our results from the previous sections, we obtain the following more precise version of Theorem \ref{thm-intro}:
\begin{theorem}\label{thm:main-tree+cycle}
    If $G$ is a ring graph, 
    then the cut variety $X_G$ is generated by quadrics.
    In addition, there exists a term order for which its defining ideal $I_G$ has a squarefree quadratic Gr\"obner basis.

    Therefore, such varieties $X_G$ are Hilbertian and arithmetically Cohen-Macaulay, but not  arithmetically Gorenstein in general.
\end{theorem}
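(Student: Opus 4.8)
The plan is to prove Theorem~\ref{thm:main-tree+cycle} by structural induction on the construction of the ring graph $G$, peeling off one cycle (or edge or vertex) at a time and applying the clique-sum machinery of Theorem~\ref{thm-lift-quad} at each step. The base cases are already in hand: a single edge gives a trivial ideal, a tree is handled by Theorem~\ref{thm-trees} and its corollaries, and a cycle is handled by Proposition~\ref{prop-cycle}. For the inductive step, I would observe that the definition of a ring graph says each non-trivial block is built from a cycle by successively attaching cycles along an edge-sum ($1$-sum), and the blocks themselves are glued along cutvertices ($0$-sums). Thus every ring graph $G$ can be written as $G = G_1 \# G_2$, where $G_2$ is a cycle, an edge, or a vertex, the clique $V_1 \cap V_2$ has at most two vertices (so $0 \le k \le 2$), and $G_1$ is again a ring graph with strictly fewer cycles.

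First I would establish the Gröbner-basis claim. By induction, $I_{G_1}$ and $I_{G_2}$ each admit a squarefree quadratic Gröbner basis ${\bf F_1}$, ${\bf F_2}$. Theorem~\ref{thm-lift-quad} then produces a Gröbner basis ${\bf M} = \Lift({\bf F_1}) \cup \Lift({\bf F_2}) \cup \Quad(G_1, G_2)$ for $I_G$ with respect to a suitable term order. The key point is that the operations $\Lift$ and $\Quad$ preserve both the degree and the squarefree structure of the binomials, exactly as was already used in the tree case (and as recorded in \cite[Corollary~18]{Su}); the $\Quad$ binomials are quadratic and squarefree by their explicit form. Hence ${\bf M}$ is a squarefree quadratic Gröbner basis of $I_G$, and in particular $I_G$ is generated by quadrics and its initial ideal is squarefree. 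For the $0$-sum case, where $V_1 \cap V_2$ is a single cutvertex, I would instead invoke Proposition~\ref{prop-disjoint} and Corollary~\ref{cor-Gr-disjoint-u} to transfer the squarefree Gröbner basis across the Segre-product structure.

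Next, the remaining algebraic conclusions follow formally. Since $I_G$ has a squarefree initial ideal, Lemma~\ref{lem-G-and-CM} immediately yields that $K[q]/I_G$ is both Cohen-Macaulay and Hilbertian, so $X_G$ is Hilbertian and arithmetically Cohen-Macaulay. It remains only to show that $X_G$ is \emph{not} arithmetically Gorenstein in general: for this it suffices to exhibit a single ring graph whose cut ideal fails to be Gorenstein. The cycle $C_5$ serves, since the remark following Proposition~\ref{prop-cycle} already records that $I_{C_5}$ is not Gorenstein; as $C_5$ is a ring graph, this proves the ``not in general'' clause.

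The main obstacle I anticipate is verifying carefully that the ring-graph construction genuinely presents $G$ as an iterated clique sum over cliques of size at most two with the correct pieces as summands, so that Theorem~\ref{thm-lift-quad} applies at every step. The definition attaches cycles by edge-sums and blocks by cutvertices, and one must check that each attachment is exactly a $k$-sum with $0 \le k \le 2$ and that the induction parameter (number of cycles, or cycle rank) strictly decreases. A secondary point requiring care is the bookkeeping that $\Lift$ and $\Quad$ preserve squarefreeness and quadraticity; although this is asserted in the tree corollary, I would want to confirm it holds uniformly when one summand is a cycle rather than an edge, so that the inductive hypothesis propagates cleanly.
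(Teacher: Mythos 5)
Your proposal is correct and follows essentially the same route as the paper, whose proof is exactly the repeated application of Proposition \ref{prop-cycle}, Theorem \ref{thm-trees}, Corollary \ref{cor-Gr-disjoint-u}, and Theorem \ref{thm-lift-quad} that you spell out (with Lemma \ref{lem-G-and-CM} supplying the Cohen--Macaulay and Hilbertian conclusions and $C_5$ witnessing the failure of Gorensteinness). The only slight mislabeling is that gluing two blocks at a cutvertex is a genuine $0$-sum already covered by Theorem \ref{thm-lift-quad}; Corollary \ref{cor-Gr-disjoint-u} is needed for the case of a disconnected ring graph, but both tools are available and yield the same conclusion.
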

\begin{proof}
    The theorem follows by applying Proposition \ref{prop-cycle}, Theorem \ref{thm-trees}, Corollary \ref{cor-Gr-disjoint-u},
    and Theorem \ref{thm-lift-quad} repeatedly.
\end{proof}
Recall that if an ideal $I$ admits a quadratic Gr\"obner basis, then the coordinate ring $S/I$ is Koszul (see \cite{Anick}). Thus, we get.

\begin{cor} \label{cor:tree+cycle=koszul}
    The coordinate ring of the cut variety associated to an arbitrary ring graph is Koszul.
\end{cor}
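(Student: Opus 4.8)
The plan is to deduce Koszulness directly from the existence of a quadratic Gr\"obner basis established in Theorem \ref{thm:main-tree+cycle}, so that essentially all of the substantive work has already been done there. First I would fix a ring graph $G$ and invoke Theorem \ref{thm:main-tree+cycle} to obtain a term order $\prec$ for which the cut ideal $I_G$ has a squarefree quadratic Gr\"obner basis. The only feature I need to retain is the consequence that the initial ideal $\mathrm{in}_\prec(I_G)$ is generated by monomials of degree two; in fact, by squarefree ones, so that it is the Stanley-Reisner ideal of a flag simplicial complex.

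Next I would recall the general principle, cited as \cite{Anick}, that a standard graded algebra $S/I$ is Koszul whenever $I$ admits a quadratic Gr\"obner basis. The mechanism is a two-step argument. The initial algebra $S/\mathrm{in}_\prec(I_G)$ is a quotient by a quadratic monomial ideal, hence Koszul by the classical theorem of Fr\"oberg that such quotients are always Koszul. Then Koszulness passes from this special fiber to $S/I_G$ through the flat Gr\"obner degeneration relating $S/I_G$ and $S/\mathrm{in}_\prec(I_G)$: the graded Betti numbers of $S/I_G$ are bounded above by those of its initial algebra, so the linearity of the resolution of the latter forces $S/I_G$ to have a linear resolution of its residue field as well, i.e.\ to be Koszul. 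Applying this to our $I_G$ yields the corollary at once.

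The main point to watch, rather than a genuine obstacle, is that Koszulness requires a Gr\"obner basis generated in degree two, and not merely that $I_G$ be generated by quadrics; this stronger input is exactly what Theorem \ref{thm:main-tree+cycle} provides, whereas the weaker statement that $X_G$ is generated by quadrics would not suffice on its own. I would also note that the whole argument is insensitive to the ground field $K$, since the passage to the monomial initial ideal, Fr\"oberg's theorem, and the degeneration comparison of Betti numbers are all characteristic free; hence the conclusion holds over an arbitrary field. No further computation is needed beyond citing Theorem \ref{thm:main-tree+cycle} and \cite{Anick}.
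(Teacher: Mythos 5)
Your proposal is correct and follows exactly the paper's route: the paper likewise deduces Koszulness immediately from the quadratic Gr\"obner basis of Theorem \ref{thm:main-tree+cycle} via the standard fact (cited as \cite{Anick}) that a quadratic Gr\"obner basis implies the coordinate ring is Koszul. Your additional explanation of the mechanism (Fr\"oberg's theorem for the initial algebra plus the Gr\"obner degeneration) is a correct elaboration of that cited fact, not a different argument.
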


\begin{rmk}\rm
    Theorem 1.3 of \cite{StSu} characterizes those graphs whose cut ideals have squarefree reverse-lexicographic initial ideals.
    Arbitrary ring graphs \emph{do not} fall into that category, however our result shows that they \emph{do} have squarefree initial ideals
    with respect to another term order.
\end{rmk}

We conclude this note with an estimate on the degrees of the higher
syzygies of cut ideals. In general, the Castelnuovo-Mumford
regularity  of a homogeneous ideal can grow doubly exponentially in
the number of variables. However, the following result shows that
cut ideals of ring graphs have much better properties also in this
respect. Recall that the cut ideal of a graph with $n$ vertices
lives in a polynomial ring in $2^{n-1}$ variables.

\begin{cor}
  \label{cor-reg}
If $G$ is a ring graph with $e$ edges, then the Castelnuovo-Mumford regularity of the cut variety $X_G$ satisfies
\[
\reg X_G \leq e + 1.
\]
\end{cor}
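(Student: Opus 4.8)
The plan is to derive the bound directly from the two structural properties of $A_{X_G}$ that Theorem \ref{thm:main-tree+cycle} already supplies, namely that $A_{X_G}$ is arithmetically Cohen--Macaulay and Hilbertian, together with the relation $\reg X_G = \reg A_{X_G} + 1$ recorded earlier and a crude bound on the Krull dimension of $A_{X_G}$. In particular I will \emph{not} need to revisit the Gr\"obner basis or the clique-sum decomposition: once Cohen--Macaulayness and Hilbertianness are in hand, the estimate is a formal consequence of standard facts about graded Cohen--Macaulay algebras.

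The first step is to recall that for a Cohen--Macaulay standard graded $K$-algebra $A$ of Krull dimension $d$ one has $\reg A = a(A) + d$, where $a(A) := \max\{i : [H^d_{\mathfrak m}(A)]_i \neq 0\}$ is the $a$-invariant. The second step is to observe that Hilbertianness forces $a(A_{X_G}) \leq -1$. Writing $h$ for the Hilbert function and $p$ for the Hilbert polynomial, the local-cohomology expression $h(i) - p(i) = (-1)^d \dim_K [H^d_{\mathfrak m}(A)]_i$, valid for a Cohen--Macaulay $A$, shows that $a(A) = \max\{i : h(i) \neq p(i)\}$; since $A_{X_G}$ is Hilbertian we have $h(i) = p(i)$ for every $i \geq 0$, so this maximum is negative.

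The third step is to bound $\dim A_{X_G}$. On the torus where all $t_{ij} \neq 0$, dividing every coordinate by the single scalar $\prod_{\{i,j\}} t_{ij}$ (the monomial attached to the trivial cut) sends $\phi_G(q_{A|B})$ to $\prod_{\{i,j\} \in Cut(A|B)} (s_{ij}/t_{ij})$, a monomial in the $e$ edge-ratios $s_{ij}/t_{ij}$. Hence $X_G$ is the closure of the image of an $e$-dimensional torus, giving $\dim X_G \leq e$ and therefore $\dim A_{X_G} = \dim X_G + 1 \leq e + 1$. Combining the three steps yields $\reg A_{X_G} = a(A_{X_G}) + \dim A_{X_G} \leq -1 + (e+1) = e$, whence $\reg X_G = \reg A_{X_G} + 1 \leq e + 1$.

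The only point demanding genuine care is the dimension estimate, where one must check that the parametrization really involves only the $e$ edge-ratios; this is immediate from the form of $\phi_G$. I expect no serious obstacle, precisely because the bound is not sharp: it exploits only the inequality $a(A_{X_G}) \leq -1$ coming from Hilbertianness, whereas trees and $C_4$ in fact satisfy $a = -2$. This suggests the sharper estimate $\reg X_G \leq e$ may hold, but establishing it would require controlling the $a$-invariant more precisely than the qualitative input of Theorem \ref{thm:main-tree+cycle} allows.
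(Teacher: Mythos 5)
Your proof is correct and follows essentially the same route as the paper's: both combine Cohen--Macaulayness with Hilbertianness to get $\reg A_{X_G} < \dim A_{X_G}$, and then use $\dim A_{X_G} \leq e+1$. The only differences are cosmetic --- where the paper cites a lemma relating the index of regularity to the Castelnuovo--Mumford regularity, you unpack it via the $a$-invariant and Serre's formula, and you prove the dimension bound by the torus parametrization rather than quoting it.
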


\begin{proof}
The least integer $r$ such that the  Hilbert  function and the Hilbert polynomial of a graded algebra in each degree $j \geq r$ is often called the index of
regularity. By Theorem \ref{thm:main-tree+cycle} we know that the coordinate ring $A_G$ of any cut variety associated to  a ring graph $G$ is Hilbertian,
i.e., its index of regularity is at most zero. However, in case of Cohen-Macaulay rings the index of regularity and the Castelnuovo-Mumford regularity are
related (see, e.g., \cite[Lemma 8]{torino}). It follows that $ \reg X_G - 1 = \reg A_G < \dim A_G.$
 Since the dimension of the  ring $A_G$ is one plus the
number of edges of $G$, the argument is complete.
\end{proof}

\begin{rmk}
  \label{rem-reg}
(i) Note that by Proposition \ref{prop-Hilb-tree} the above bound is almost sharp if the graph is a tree.

(ii)
The above arguments use only the fact that cut ideals of ring graphs admit a squarefree Gr\"obner basis. Thus, they provide the following observation:

If the cut ideal of a graph $G$ admits a squarefree Gr\"obner basis, then the Castelnuovo-Mumford regularity of the cut variety $X_G$ is bounded above by one plus the number of edges of $G$.
\end{rmk}

\section*{Acknowledgement}
The authors would like to thank Rafael Villarreal for introducing
them to ring graphs.


\end{document}